\documentclass[reqno,twoside,12pt]{amsart}
\usepackage{amssymb,amsfonts,amsthm,amsmath}
\usepackage{enumitem}
%\usepackage[notref,notcite]{showkeys}
%\usepackage{mathtools}
%\mathtoolsset{showonlyrefs}
\usepackage[pagebackref=false]{hyperref}%\hypersetup{pagebackref=true}
\usepackage[hmargin=1in,vmargin=1in]{geometry}
\usepackage{cite}

\def\eqdef{\stackrel{\rm def}{=}}

\def\d{{\rm d}}
\def\ddt{\frac{\d}{\d t}}

\newcommand{\tnum}{\rm(\roman*)}
\newcommand{\rnum}{\rm(\alph*)}

\def\beq{\begin{equation}}
\def\eeq{\end{equation}}
\def\beqs{\begin{equation*}}
\def\eeqs{\end{equation*}}

\newtheorem{theorem}{Theorem}[section]
\newtheorem{lemma}[theorem]{Lemma}

\newtheorem{corollary}[theorem]{Corollary}
\newtheorem{definition}[theorem]{Definition}
\newtheorem{assumption}[theorem]{Assumption}

\theoremstyle{definition}
\newtheorem{remark}[theorem]{Remark}
\newtheorem{example}[theorem]{Example}
\newtheorem*{xnotation}{Notation}

%=== Edit ===%
\usepackage{color}

%bkground hilite

%==========%

\def\varep{\varepsilon}

%%%%%%%%%%%%%

\newcommand{\R}{\ensuremath{\mathbb R}}

\newcommand{\N}{\ensuremath{\mathbb N}}

\newcommand{\bigo}{\mathcal O}

%%%%%%%%%%%%%

\numberwithin{equation}{section}

\title{\textbf{Long-time behavior of solutions of superlinear systems of differential equations}}
\author{Luan Hoang}

\address{Department of Mathematics and Statistics,
Texas Tech University\\
1108 Memorial Circle, Lubbock, TX 79409--1042, U. S. A.}
\email{luan.hoang@ttu.edu}

\makeatletter
\@namedef{subjclassname@2020}{
  \textup{2020} Mathematics Subject Classification}
\makeatother
\keywords{superlinear differential equations, nonlinear dynamical system,  long-time behavior, asymptotic approximation}
\subjclass[2020]{34D05, 41A60}
%41A60   	Asymptotic approximations, asymptotic expansions
%34D05   	Asymptotic properties of solutions to ordinary differential equations

\date{\today}

\begin{document}

\begin{abstract} This paper establishes the precise asymptotic behavior, as time $t$ tends to infinity, for nontrivial, decaying solutions of genuinely nonlinear systems of ordinary differential equations. The lowest order term in these systems, when the spatial variables are small, is not linear, but rather positively homogeneous of a degree greater than one. We prove that the solution behaves like $\xi t^{-p}$, as $t\to\infty$, for a nonzero vector $\xi$ and an explicit number $p>0$.
\end{abstract}

\maketitle

%\begin{center}
%\textit{Dedicated to Professor Jean-Claude Saut}
%\end{center}

\tableofcontents

\pagestyle{myheadings}\markboth{L. Hoang}
{Long-time Behavior  of Solutions of Superlinear Systems of Differential Equations}

%=====================%
 
\section{Introduction}\label{intro}

This paper answers a fundamental question about the precise asymptotic behavior of decaying solutions, as time tends to infinity, of a genuine nonlinear system of ordinary differential equations. The word ``genuine'' is used roughly here to refer to the case when the main dissipation term in the system is nonlinear. This situation is different from many previously studied  systems of ordinary differential equations (ODE) and partial differential equations (PDE).
Our starting point is the following result by Foias and Saut \cite{FS84a} for solutions of the Navier--Stokes equations (NSE) in bounded or periodic domains. They consider the NSE written in the functional form in an appropriate functional space as
\beq\label{NSE}
u'+Au+B(u,u)=0,
\eeq
where $A$ is a linear operator with positive eigenvalues and $B(\cdot,\cdot)$ is a bilinear form.

For any nontrivial solution  $u(t)$ of \eqref{NSE}, there exist an eigenvalue $\Lambda$ of $A$ and an eigenfunction $\xi$ of $A$ associated with $\Lambda$ such that
\beq\label{FSlim}
e^{\Lambda t}u(t)\to \xi\text{ as $t\to\infty$.}
\eeq
Above, the limit holds in any $C^m$-norms.

This result is extended to more general differential inequalities in \cite{Ghidaglia1986a}. As a consequence of \cite{Ghidaglia1986a}, if $u(t)$ is a nontrivial, decaying solution of 
\beq\label{uAF}
u'+Au=F(u),
\eeq 
where $F(u)$ is a higher order term, as $u\to 0$, then the asymptotic approximation \eqref{FSlim} holds.
The proof in \cite{Ghidaglia1986a} follows that of Foias and Saut in \cite{FS84a}.
For systems of ODE, this result is re-established in \cite{CaHK1} using a different method.

Note that Foias and Saut also obtain the asymptotic expansions for the solutions of the NSE \eqref{NSE} in \cite{FS87}.
However, these asymptotic expansions can be obtained independently from the first approximation in \eqref{FSlim}.  
The interested reader can find other results on the asymptotic expansions for different ODE and PDE without forcing functions in \cite{Minea,Shi2000,HM1,HTi1,CaHK1}, and with forcing functions in \cite{HM2,CaH1,CaH2,CaH3,H5,H6}.
In particular,  \cite{CaHK1} obtains the asymptotic expansions of nontrivial, decaying solutions of \eqref{uAF} in the case $F(u)$ is not smooth in any neighborhood of the origin. Roughly speaking, $F(u)$ can be approximated, near the origin,  by a finite sum or a series of positively homogeneous terms of possibly nonintegral degrees, see Definition \ref{phom} below. A crucial step in  \cite{CaHK1} is the very first asymptotic approximation \eqref{FSlim}. 

All of the mentioned papers deal with the equations when the main dissipation term is linear.
Having motivated by  the general class of  positively homogeneous functions in \cite{CaHK1}, we study a quite different class of equations when the lowest order term is nonlinear. More specifically, we consider the following ODE system in $\R^n$
\beq\label{yReq}
y'=-H(y) Ay +G(t,y),
\eeq
where $A$ is a constant $n\times n$ matrix with positive eigenvalues, $H$ is a positively homogeneous function of degree $\alpha>0$, and $G$ is a higher order term. More precise conditions will be stated in Sections \ref{aesec}, \ref{symcase}, \ref{gencase} below. 

We study the nontrivial, decaying solutions $y(t)$ of \eqref{yReq} as $t\to\infty$.
Because of the lack of the linear term, $y(t)$ will not  decay exponentially. Rather, it will decay as a power function, see Theorem \ref{smallthm} below.
We present here a heuristic argument  in order to find out certain information about a possible asymptotic approximation for $y(t)$. 

We assume, as $t\to\infty$,  that 
\beq \label{yxip}
y(t)\sim\xi t^{-p}\text{ for some nonzero vector $\xi\in \R^n$ and real number $p>0$.}
\eeq 

Using this to approximate both sides of equation \eqref{yReq} and ignoring the higher order term $G(t,y)$, we obtain 
\beqs
-p t^{-p-1} \xi  \sim -t^{-p(\alpha+1)}H(\xi) A\xi.
\eeqs

Matching the power of $t$ and the coefficients from both sides gives 
\beq\label{heu0}
p=1/\alpha\text{ and }
A\xi=\frac{1}{\alpha H(\xi)}\xi.
\eeq

Thus, $\Lambda \eqdef 1/ (\alpha H(\xi))$ is an eigenvalue of $A$, and $\xi$ is an eigenvector of $A$ associated with $\Lambda$. It turns out that the asymptotic approximation \eqref{yxip} with \eqref{heu0} is exactly what we will obtain rigorously under some appropriate conditions on $A$, $H$ and $G$. This is the goal of the current paper.

The paper is organized as follows. 
In Section \ref{simeg}, we establish in Theorem \ref{simthm} the asymptotic behavior of the solutions to a simple system which comes up in many applications. 
In addition to its own merit, the result serves as a key step in the proof of the main results for more general cases in Sections \ref{symcase} and \ref{gencase}.
In Section \ref{aesec}, we specify the conditions for $A$, $H$ and $G$.
The basic issues of global existence, uniqueness and asymptotic estimates are established in Theorem \ref{smallthm}. The estimates in \eqref{yplus} actually justify the correct decaying mode that we attempted in the heuristic argument \eqref{yxip}. 
Section \ref{symcase} treats the case when the matrix $A$ is symmetric. The main result in this section is Theorem \ref{thmsym}, which basically proves \eqref{yxip} and \eqref{heu0}. 
Preparations for it consist of many steps from Lemma \ref{nzprop} to Lemma \ref{lem4}. 
The proof of Theorem \ref{thmsym} inherits  some original  ideas from the work of Foias and Saut \cite{FS84a}. They include studying the asymptotic behavior of the ``Dirichlet" quotient $\lambda(t)$ and the function $v(t)$ -- a normalization of the solution $y(t)$, see \eqref{quot}. 
However, to treat the genuine nonlinearity in the problem, new ideas and techniques are needed.
Notable among them are a new perturbation argument and the newly realized property (HC) for the function $H$, see Definition \ref{HCdef} and Assumption \ref{HHsphere}. 
Regarding the former, equation \eqref{yReq} is considered as a perturbation of the reduced equation \eqref{Rygood}, which is in the form of the basic case \eqref{basicf}. 
It is based on the projection of equation \eqref{yReq} to an appropriate eigenspace of $A$ and by freezing $v(t)$ to its limit $v_*$. The corresponding eigenvalue $\Lambda$ of this eigenspace turns out to be determined by the asymptotic behavior of the quotient $\lambda(t)$, see  Lemma \ref{lem1}.
This perturbation approach is facilitated by the function $H$ having the property (HC). Although it is a H\"older-like continuity condition, it is actually much weaker than the standard H\"older continuity. 
The most general result of the paper is Theorem \ref{mainthm} in Section \ref{gencase}. It is derived from Theorem \ref{thmsym} by the standard linear transformation using the equivalence of $A$ to a diagonal matrix in \eqref{Adiag}. 
Theorem \ref{xirel} shows that whenever the asymptotic approximation \eqref{yxip} is established, it dictates property \eqref{heu0} of the power $p$ and the vector $\xi$.
Many examples of the function $H$ are given in Example \ref{Heg}.
Finally, we briefly comment on the literature in Remark \ref{other}.

\begin{xnotation}
For any vector $x\in\R^n$, we denote by $|x|$ its Euclidean norm.

For an $n\times n$ real matrix $A=(a_{ij})_{1\le i,j\le n}$, its Euclidean norm is 
$\|A\|=(\sum_{i,j=1}^n a_{ij}^2)^{1/2}$, while its norm as a bounded linear operator is  $\|A\|_{\rm op}=\max\{|Ax|:|x|=1\}$.

For two functions $f,g:[T,\infty)\to [0,\infty)$, for some number $T\in\R$,  we write 
\beqs
f(t)=\bigo(g(t))\text{ as $t\to\infty$,}
\eeqs 
if there are $T'\ge T$  and $C>0$ such that $f(t)\le Cg(t)$ for all $t\ge T'$. Very often, ``as $t\to\infty$'' is implicitly understood and, hence, omitted.

Hereafter, $n\in\N=\{1,2,3,\ldots\}$ is the spatial dimension and is fixed.
\end{xnotation}

\section{A basic case}\label{simeg}
Let $a>0$, $\alpha>0$ and $t_*\ge 0$ be given numbers.  

Assume $y\in C^1([t_*,\infty),\R^n)$ satisfies $y(t)\ne 0$ for all $t\ge t_*$, 
\beq \label{limzero}
\lim_{t\to\infty} y(t)= 0,
\eeq 
and
\beq\label{basicf}
y'=-a|y|^\alpha y +f(t) \text{ for } t>t_*,
\eeq
where  $f$ is a continuous function from $[t_*,\infty)$ to $\R^n$ such that
\beq \label{frate}
|f(t)|\le M|y(t)|^{\alpha+1+\delta},\text{ for all $t\ge t_*$ and some constants $M,\delta>0$}.
\eeq 

The asymptotic behavior of $y(t)$, as $t\to\infty$, is the following.

\begin{theorem}\label{simthm}
There exists a nonzero vector $\xi_*\in\R^n$ such that, as $t\to\infty$,
\beq\label{yxi}
|y(t)- \xi_* t^{-1/\alpha}|=\bigo(t^{-1/\alpha-\varep})\text{ for some $\varep>0$.}
\eeq

Moreover,
\beq\label{axi1}
\alpha a |\xi_*|^\alpha=1.
\eeq
\end{theorem}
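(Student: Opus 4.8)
The plan is to first pin down the decay rate of $|y(t)|$, then identify the direction of $y(t)$, and finally extract the eigenvector relation \eqref{axi1}. For the scalar quantity, set $r(t) = |y(t)|$. Taking the inner product of \eqref{basicf} with $y/|y|$ gives $r' = -a r^{\alpha+1} + \langle f(t), y/|y|\rangle$, so by \eqref{frate} we have $r' = -a r^{\alpha+1} + \bigo(r^{\alpha+1+\delta})$. Since $r\to 0$, for large $t$ the error term is dominated by, say, $\tfrac12 a r^{\alpha+1}$, giving $-\tfrac{3}{2} a r^{\alpha+1} \le r' \le -\tfrac12 a r^{\alpha+1}$. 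Integrating the differential inequality for $(r^{-\alpha})' = -\alpha r^{-\alpha-1} r'$ yields $r(t) \sim c\, t^{-1/\alpha}$ for constants $c$ between $(\tfrac{\alpha a}{2})^{-1/\alpha}$ and $(\tfrac{3\alpha a}{2})^{-1/\alpha}$; more carefully, one shows $t^{1/\alpha} r(t)$ converges. Indeed, letting $w(t) = t\, r(t)^\alpha$, a direct computation gives $w' = r^\alpha + \alpha t r^{\alpha-1} r' = r^\alpha(1 - \alpha a w) + \bigo(t r^{\alpha} \cdot r^\delta)$, and since $r^\delta = \bigo(t^{-\delta/\alpha})$ this is a perturbed logistic-type equation forcing $w(t) \to 1/(\alpha a)$. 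Hence $r(t) = (\alpha a)^{-1/\alpha} t^{-1/\alpha}(1 + o(1))$, which already gives the magnitude in \eqref{axi1}: $\alpha a |\xi_*|^\alpha = 1$ will follow once $|\xi_*| = \lim t^{1/\alpha} r(t) = (\alpha a)^{-1/\alpha}$ is established, and quantitatively $w(t) - 1/(\alpha a) = \bigo(t^{-\delta/\alpha})$ from the perturbed ODE.

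Next I would show the unit vector $v(t) = y(t)/|y(t)|$ converges. Differentiating, $v' = \tfrac{y'}{r} - \tfrac{\langle y', y\rangle}{r^3} y = \tfrac{1}{r}\bigl(f(t) - \langle f(t), v\rangle v\bigr)$, because the $-a r^\alpha y$ term is parallel to $y$ and cancels in the projection onto the sphere. Therefore $|v'(t)| \le \tfrac{2}{r}|f(t)| \le 2M r^\alpha = \bigo(t^{-1})$. This bound alone is not integrable, so I would sharpen it: using $r^\delta = \bigo(t^{-\delta/\alpha})$ gives the improved estimate $|v'(t)| \le 2M r^{\alpha} \le C t^{-1}$, which is still borderline — this is the main obstacle. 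The fix is to use the refined rate $r(t) \le C' t^{-1/\alpha}$ together with the \emph{extra} decay built into $|f(t)| \le M r^{\alpha+1+\delta} = \bigo(r \cdot t^{-1 - \delta/\alpha} \cdot t)$; precisely $|v'(t)| \le \tfrac{2}{r}\cdot M r^{\alpha+1+\delta} = 2M r^{\alpha+\delta} = \bigo(t^{-1-\delta/\alpha})$. Since $\delta > 0$, $v'$ is integrable on $[T,\infty)$, so $v(t)$ converges to some unit vector $v_* \in \R^n$, with $|v(t) - v_*| = \bigo(t^{-\delta/\alpha})$.

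Combining the two pieces, $y(t) = r(t) v(t)$, and with $\xi_* \eqdef (\alpha a)^{-1/\alpha} v_*$ one gets
\beqs
y(t) - \xi_* t^{-1/\alpha} = \bigl(r(t) - (\alpha a)^{-1/\alpha} t^{-1/\alpha}\bigr) v(t) + (\alpha a)^{-1/\alpha} t^{-1/\alpha}\bigl(v(t) - v_*\bigr).
\eeqs
The first term is $\bigo(t^{-1/\alpha} \cdot t^{-\delta/\alpha})$ from the rate on $w(t) = t r(t)^\alpha$ (note $r - (\alpha a)^{-1/\alpha}t^{-1/\alpha} = t^{-1/\alpha}((\alpha a w)^{-1/\alpha} - (\alpha a)^{-1/\alpha}) = \bigo(t^{-1/\alpha}|w - 1/(\alpha a)|)$), and the second is $\bigo(t^{-1/\alpha-\delta/\alpha})$ from the rate on $v(t)$. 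So \eqref{yxi} holds with $\varep = \delta/\alpha$ (or any smaller positive number). Finally $\xi_* \ne 0$ since $|\xi_*| = (\alpha a)^{-1/\alpha} > 0$, and squaring gives $\alpha a |\xi_*|^\alpha = \alpha a \cdot (\alpha a)^{-1} = 1$, which is \eqref{axi1}. The one place demanding care is making the logistic-type convergence $w(t) \to 1/(\alpha a)$ quantitative with rate $t^{-\delta/\alpha}$; I would do this by a Gronwall/barrier argument comparing $w$ to the constant $1/(\alpha a)$, using that near the equilibrium the linearization $-\alpha a r^\alpha w$ has a strictly negative coefficient (of size $\bigo(t^{-1})$), and checking that the resulting integrating factor $\exp(\int \alpha a r^\alpha\,ds)$ grows like a positive power of $t$ that beats the forcing.
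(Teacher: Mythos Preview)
Your polar decomposition $y=r v$ is a genuinely different route from the paper's, and it works---once one arithmetic slip is repaired. In the equation for $w=tr^\alpha$ you wrote the remainder as $\bigo(tr^\alpha\cdot r^\delta)=\bigo(t^{-\delta/\alpha})$; the correct count is
\[
w'=r^\alpha+\alpha t r^{\alpha-1}r'
= r^\alpha(1-\alpha a w)+\alpha t r^{\alpha-1}\langle f,v\rangle
= r^\alpha(1-\alpha a w)+\bigo\bigl(t\,r^{2\alpha+\delta}\bigr)=r^\alpha(1-\alpha a w)+\bigo(t^{-1-\delta/\alpha}).
\]
This factor of $r^\alpha\sim t^{-1}$ is not cosmetic: with your stated error $\bigo(t^{-\delta/\alpha})$ the Gronwall step fails, because the integrating factor $\exp\bigl(\int\alpha a r^\alpha\bigr)$ only grows like a power of $t$ (eventually $\sim t$), and against forcing of size $t^{-\delta/\alpha}$ one obtains $|w-1/(\alpha a)|=\bigo(t^{1-\delta/\alpha})$, which does not even tend to zero when $\delta<\alpha$. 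With the correct $\bigo(t^{-1-\delta/\alpha})$ forcing your barrier argument yields $|w-1/(\alpha a)|=\bigo(t^{-\varepsilon'})$ for some $\varepsilon'>0$ (indeed $\varepsilon'=\delta/\alpha$ after a bootstrap, or directly if one first reduces to $\delta<\alpha$). A similar miscount appears in the angular part: $\tfrac{2}{r}|f|\le 2M r^{\alpha+\delta}$ from the start, not $2M r^\alpha$, so the integrability of $v'$ is immediate rather than an ``obstacle.'' With these fixes your combination $y-\xi_*t^{-1/\alpha}=(r-(\alpha a)^{-1/\alpha}t^{-1/\alpha})v+(\alpha a)^{-1/\alpha}t^{-1/\alpha}(v-v_*)$ gives \eqref{yxi}, and \eqref{axi1} follows from $|\xi_*|=(\alpha a)^{-1/\alpha}$.

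For comparison, the paper does not separate modulus and direction. It first pins down $|y|^{-\alpha}=|y(t_0)|^{-\alpha}+a\alpha(t-t_0)+g(t)$ with $|g(t)|=\bigo(t^{1-\delta/\alpha})$, then treats \eqref{basicf} as a \emph{linear} equation in $y$ with scalar coefficient $-a|y(t)|^\alpha$ and applies variation of constants, writing $J(t)=a\int|y|^\alpha=J_1(t)-J_2(t)$ with $J_1$ an explicit logarithm and $J_2$ convergent. This yields an explicit formula for $\xi_*$ and the rate in one stroke. Your approach is more geometric and arguably more transparent---the key structural fact that the leading nonlinearity is radial is exploited directly to decouple $r$ and $v$---while the paper's computation is more explicit and avoids any bootstrap on the rate of $w$. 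Both give $\varepsilon=\delta/\alpha$ after the WLOG reduction $\delta<\alpha$.
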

\begin{proof}
By increasing the initial time $t_*$ to be sufficiently large, we can assume, without loss of generality, that $|y(t)|\le 1$ for all $t\ge t_*$, and $\delta<\alpha$. 

For $t> t_*$, we calculate
\beq\label{dy3}
\ddt(|y|^{-\alpha})=-\frac{\alpha}{|y|^{\alpha+2}} y'\cdot y
=a\alpha - \frac{\alpha f(t)\cdot y}{|y|^{\alpha+2}}.
\eeq

By Cauchy-Schwarz's inequality and condition \eqref{frate}, we have 
\beq\label{fyyrate}
\frac{|f(t)\cdot y(t)|}{|y(t)|^{\alpha+2}}\le M|y(t)|^\delta \text{ for $t\ge t_*$.}
\eeq

We fix two positive numbers $a_1$ and $a_2$ such that $a_1<a<a_2$. 
Thanks to \eqref{limzero} and \eqref{fyyrate}, there is a positive number $t_0\ge t_*$ such that one has 
\beqs
\alpha a_1 \le \ddt|y|^{-\alpha}\le \alpha a_2 \text{ for all $t>t_0$.}
\eeqs 

Hence, for $t\ge t_0>0$,
\begin{align*}
&|y(t)|^\alpha\ge \frac{1}{|y(t_0)|^{-\alpha}+ \alpha a_2 (t-t_0)}
 \ge C_1^\alpha t^{-1},  \\
 &|y(t)|^\alpha \le \frac{1}{|y(t_0)|^{-\alpha}+ \alpha a_1 (t-t_0)}
\le C_2^\alpha t^{-1},
\end{align*}
where $C_1$ and $C_2$ are some positive numbers.
We obtain
\beq\label{roughy}
C_1 t^{-1/\alpha} \le |y(t)|\le C_2 t^{-1/\alpha}\text{ for all } t\ge t_0.
\eeq

As a consequence of \eqref{frate} and \eqref{roughy}, we have
\beq\label{fta}
|f(t)| \le M_1 t^{-1-1/\alpha-\delta/\alpha} \text{ for all $t\ge t_0$, where $M_1=M C_2^{1+\alpha+\delta}>0$.
}
\eeq

Integrating equation \eqref{dy3} gives
\beqs
|y(t)|^{-\alpha}-|y(t_0)|^{-\alpha}=a\alpha (t-t_0)+g(t), 
\text{ where } g(t)=-\alpha \int_{t_0}^t \frac{f(\tau)\cdot y(\tau)}{|y(\tau)|^{\alpha+2}}\d\tau.
\eeqs

Hence, for all $t\ge t_0$, one has $|y(t_0)|^{-\alpha}+a\alpha (t-t_0)+g(t)>0$ and 
\beq\label{ynorm}
|y(t)|^\alpha=\frac{1}{|y(t_0)|^{-\alpha}+a\alpha (t-t_0)+g(t)}.
\eeq

Using \eqref{fyyrate} and the upper bound of $|y(t)|$ in \eqref{roughy}, we estimate 
\begin{align*}
|g(t)|&\le \alpha \int_{t_0}^t \frac{|f(\tau)\cdot y(\tau)|}{|y(\tau)|^{\alpha+2}}\d\tau
\le \alpha M\int_{t_0}^t |y(\tau)|^{\delta} \d\tau \\
&\le \alpha MC_2^\delta \int_{t_0}^t \tau^{-\delta/\alpha}\d\tau
= \frac{\alpha MC_2^\delta}{1-\delta/\alpha}(t^{1-\delta/\alpha}-t_0^{1-\delta/\alpha}).
\end{align*}

Setting $C_3=\alpha^2 MC_2^\delta/(\alpha-\delta)>0$, we obtain
\beq\label{gC}
|g(t)|\le C_3 t^{1-\delta/\alpha}\text{ for all $t\ge t_0$.}
\eeq 

We consider equation \eqref{basicf} as a linear equation of $y$ with time-dependent coefficient $-a|y(t)|^\alpha$ and forcing function $f(t)$. By the variation of constants formula, we solve for $y(t)$ explicitly as
\begin{align*}
y(t)&=e^{-J(t)}\left(y(t_0)+
\int_{t_0}^t e^{J(\tau)}f(\tau)\d\tau\right) \text{ for $t\ge t_0$,}
\end{align*}
where
\beq\label{Iform}
J(t)=a\int_{t_0}^t |y(\tau)|^\alpha\d\tau.
\eeq

Using \eqref{ynorm} in \eqref{Iform}, we rewrite $J(t)$ as 
\begin{align*}
J(t)=\int_{t_0}^t \frac{a}{|y(t_0)|^{-\alpha}+ a\alpha (\tau-t_0)+  g(\tau)}\d\tau=J_1(t)-J_2(t),
\end{align*}
where
\begin{align*}
J_1(t)&=\int_{t_0}^t \frac{a}{|y(t_0)|^{-\alpha}+ a\alpha (\tau-t_0)}\d\tau
\text{ and }
J_2(t)=\int_{t_0}^t  h(\tau) \d\tau,\\
\intertext{ with } 
h(\tau)&=\frac{a g(\tau)}{(|y(t_0)|^{-\alpha}+ a\alpha (\tau-t_0))(|y(t_0)|^{-\alpha}+ a\alpha (\tau-t_0)+ g(\tau))}.
\end{align*}

Clearly,
\beq\label{J1}
J_1(t)=\frac1{\alpha}\ln (1+|y(t_0)|^\alpha a\alpha(t-t_0)).
\eeq

Therefore,
\beq\label{yJform}
y(t)=\frac{e^{J_2(t)}\left( y(t_0) +\int_{t_0}^t e^{J(\tau)}f(\tau)\d\tau\right)}{(1+|y(t_0)|^\alpha a\alpha (t-t_0))^{1/\alpha} }.
\eeq

Consider the integrand $h(\tau)$ of $J_2(t)$. Taking into account the estimate of $|g(\tau)|$ in \eqref{gC}, 
we assert that there is  $t_1\ge t_0$ such that, for $\tau\ge t_1$, 
\begin{align*}
|y(t_0)|^{-\alpha}+ a\alpha (\tau-t_0)&\ge a\alpha \tau/2,\\
\big| |y(t_0)|^{-\alpha}+ a\alpha (\tau-t_0)+g(\tau)\big|
&\ge  a\alpha (\tau-t_0)-C_3\tau^{1-\delta/\alpha}
\ge a\alpha \tau/2.
\end{align*}

Combining these estimates with \eqref{gC} yields, as $\tau\to\infty$,
\beq\label{htau}
|h(\tau)|=\bigo(|g(\tau)|\tau^{-2})=\bigo(\tau^{1-\delta/\alpha}\tau^{-2})=\bigo(\tau^{-1-\delta/\alpha}). 
\eeq

Therefore, 
\beq\label{J2lim} 
\lim_{t\to\infty}J_2(t)=\int_{t_0}^\infty h(\tau)\d\tau=J_*\in\R,
\eeq 
and 
\beqs
J_2(t)=J_*+h_1(t), \text{ where } h_1(t)=-\int_t^\infty h(\tau)\d\tau.
\eeqs

Estimate \eqref{htau} implies 
\beqs
|h_1(t)|=\bigo(t^{-\delta/\alpha}).
\eeqs

Consequently,
\beq\label{eJ2}
e^{J_2(t)}=e^{J_*}e^{h_1(t)}=e^{J_*}+h_2(t),\text{ where } h_2(t)=e^{J_*}(e^{h_1(t))}-1).
\eeq

Since $h_1(t)\to 0$ as $t\to\infty$, we have 
\beq\label{h2}
|h_2(t)|=\bigo(|h_1(t)|)=\bigo(t^{-\delta/\alpha}).
\eeq

Regarding the integral in  formula \eqref{yJform}, we use \eqref{fta}, \eqref{J1} and \eqref{J2lim} to have
\begin{align}
e^{J(t)}|f(t) |
&=(1+|y(t_0)|^\alpha a\alpha( t-t_0))^{1/\alpha} e^{-J_2(t)}|f(t) |\notag \\
&=\bigo(t^{1/\alpha}\cdot 1\cdot t^{-1-1/\alpha-\delta/\alpha})
=\bigo(t^{-1-\delta/\alpha}).\label{eJf}
\end{align}

Therefore,
\beqs
\lim_{t\to\infty}\int_{t_0}^t e^{J(\tau)}f(\tau)\d\tau=\int_{t_0}^\infty e^{J(\tau)}f(\tau)\d\tau=\eta_*\in \R^n,
\eeqs
and
\beq\label{ieJ}
\int_{t_0}^t e^{J(\tau)}f(\tau)\d\tau=\eta_*+\eta(t),\text{ where }\eta(t)=-\int_t^\infty e^{J(\tau)}f(\tau)\d\tau.
\eeq 

It follows \eqref{eJf} that
\beq\label{etes}
|\eta(t)|=\bigo(t^{-\delta/\alpha}).
\eeq

Combining \eqref{yJform}, \eqref{eJ2} and \eqref{ieJ} gives
\beqs
y(t)=\frac{e^{J_*}+h_2(t)}{(1+|y(t_0)|^\alpha a\alpha (t-t_0))^{1/\alpha}}(y(t_0)  +\eta_*+\eta(t))
\text{ for $t\ge t_0$.}
\eeqs

This expression and properties \eqref{h2}, \eqref{etes} imply
\beq \label{yy1}
\left|y(t)-\frac{e^{J_*}(y(t_0)  +\eta_*)}{(1+|y(t_0)|^\alpha a\alpha (t-t_0))^{1/\alpha}} \right|=\bigo(t^{-1/\alpha-\delta/\alpha}).
\eeq

We write
$$\frac{1}{(1+|y(t_0)|^\alpha a\alpha (t-t_0))^{1/\alpha}}=\frac{1}{|y(t_0)|(a\alpha t)^{1/\alpha}}\left(1+\frac{1-|y(t_0)|^\alpha a\alpha t_0}{|y(t_0)|^\alpha a\alpha t}\right)^{-1/\alpha}.$$

We use  the approximation $|(1+x)^{-1/\alpha}-1|=\bigo(|x|)$ as $x\in \R$, $x\to 0$, and then substitute $x=x(t):=\frac{1-|y_0|^\alpha a\alpha t_0}{|y_0|^\alpha a\alpha t}$ when $t$ is large. We obtain
\beqs
\left| \frac{1}{(1+|y(t_0)|^\alpha a\alpha (t-t_0))^{1/\alpha}}
-\frac{1}{|y(t_0)|(a\alpha t)^{1/\alpha}}\right|
=\bigo\left(\frac{|x(t)|}{|y(t_0)|(a\alpha t)^{1/\alpha}}\right)=\bigo(t^{-1/\alpha-1}). 
\eeqs

Combining this fact with \eqref{yy1}, we derive
\beqs
|y(t)-\xi_* t^{-1/\alpha}|=\bigo(t^{-1/\alpha-\delta/\alpha}+t^{-1/\alpha-1})=\bigo(t^{-1/\alpha-\delta/\alpha}),
\eeqs
with
$$\xi_*= \frac{e^{J_*}}{|y(t_0)|(a\alpha)^{1/\alpha}}( y(t_0)+\eta_*)\in\R^n. $$
Therefore, we obtain the desired estimate \eqref{yxi}. 

Because of the lower bound of $|y(t)|$ in \eqref{roughy}, the vector $\xi_*$ in \eqref{yxi} must be nonzeo. 

It remains to prove identity \eqref{axi1}.
 By the triangle inequality and \eqref{yxi}, one has
  \beq\label{yxinorm1}
\big| |y(t)|- |\xi_*| t^{-1/\alpha}\big|=\bigo(t^{-1/\alpha-\varep}).
\eeq
  
From \eqref{ynorm},
\beqs
|y(t)|=\frac1{(a\alpha t)^{1/\alpha}}\cdot \left(1+\frac{|y(t_0)|^{-\alpha}- a\alpha t_0+g(t)}{ a\alpha t}\right)^{-1/\alpha}.
\eeqs
  
Taking into account estimate \eqref{gC} of $|g(t)|$,  we have, as $t\to\infty$,
\beqs
\left| |y(t)|-\frac{t^{-1/\alpha}}{(a\alpha)^{1/\alpha}}\right|
=\bigo\left(\frac1{t^{1/\alpha}}\cdot\frac{\big | |y(t_0)|^{-\alpha}- a\alpha t_0+g(t)\big|}{a\alpha t}\right)
=\bigo\left(\frac1{t^{1/\alpha}}\cdot \frac{|g(t)|}{t}\right),
\eeqs  
which yields
\beq\label{yxinorm2}
\left| |y(t)|-\frac{t^{-1/\alpha}}{(a\alpha)^{1/\alpha}}\right|
=\bigo(t^{-1/\alpha-\delta/\alpha}).
\eeq  

Comparing two asymptotic approximations of the norm $|y(t)|$, as $t\to\infty$, in \eqref{yxinorm1} and \eqref{yxinorm2}, one must have $|\xi_*|=1/(a\alpha)^{1/\alpha}$, which proves \eqref{axi1}.
The proof is complete.
\end{proof}
 
We note from \eqref{yxinorm1} and \eqref{axi1} that the norm $|y(t)|$ can be asymptotically approximated, as $t\to\infty$,  by $|\xi_*|t^{-1/\alpha}$ with  $|\xi_*|=(a\alpha)^{-1/\alpha}$ independent of  the solution $y(t)$. 
This, in fact, agrees with formula \eqref{ynorm}.

\section{Background}\label{aesec}

Consider the ODE system \eqref{yReq} in $\R^n$. We specify the conditions for $A$, $H$ and $G$ in the following.

\begin{assumption}\label{assumpA}
Hereafter,  $A$ is a (real) diagonalizable $n\times n$ matrix with positive eigenvalues.
\end{assumption}

Thanks to Assumption \ref{assumpA}, the spectrum $\sigma(A)$ of matrix $A$ consists of eigenvalues $\Lambda_k$'s, for $1\le k\le n$,  which are positive and increasing in $k$.
Then there exists an invertible $n\times n$ (real) matrix $S$ such that
\beq \label{Adiag}
A=S^{-1} A_0 S,
\text{ where } A_0={\rm diag}[\Lambda_1,\Lambda_2,\ldots,\Lambda_n].
\eeq 

Denote the distinct eigenvalues of $A$ by $\lambda_j$'s which are strictly increasing in $j$, i.e., 
 \beqs
 0<\lambda_1=\Lambda_1<\lambda_2<\ldots<\lambda_{d}=\Lambda_n \text{ for some integer $d\in[1,n]$.}
 \eeqs
 
In the case $A$ is symmetric, the matrix $S$ is orthogonal, i.e., $S^{-1}=S^{\rm T}$, and
\beq\label{xAx}
\Lambda_1|x|^2\le x\cdot Ax\le \Lambda_n |x|^2 \text{ for all $x\in\R^n$.}
\eeq

Regarding the first nonlinearity in \eqref{yReq}, the function $H$ will be in the following class.

\begin{definition}\label{phom} 
Let $X$ and $Y$ be two (real) linear spaces.

A function $F:X\to Y$ is positively homogeneous of degree $\beta>0$ if
\beq\label{Fb}
F(tx)=t^\beta F(x)\text{ for any $x\in X$ and $t>0$.}
\eeq

Define $\mathcal H_\beta(X,Y)$ to be the set of positively homogeneous functions of degree $\beta$ from $X$ to $Y$.

\end{definition}

In Definition \ref{phom}, by taking $x=0$ and $t=2$ in \eqref{Fb}, one has 
$F(0)=0$.
Consequently, \eqref{Fb} holds for all $t\ge 0$.

Clearly, each $\mathcal H_\beta(X,Y)$ is a linear space, and the zero function belongs to $\mathcal H_\beta(X,Y)$ for all $\beta>0$.
If $F:X\to Y$ is a homogeneous polynomial of degree $m\in\N$, then $F\in \mathcal H_m(X,Y)$.
The spaces $\mathcal H_\beta(X,Y)$'s can contain much more complicated functions than polynomials and finite sums of power monomials, see \cite{CaHK1} for many examples.

\begin{assumption}\label{Hf} The function $H$ belongs to $\mathcal H_\alpha(\R^n,\R)$ for some $\alpha>0$,  is continuous on $\R^n$, and $H(x)> 0$ for all $x\in \R^n\setminus \{0\}$.
\end{assumption}

One has from Assumption \ref{Hf} that
\beqs  
0<c_1=\min_{|x|=1} H(x)\le \max_{|x|=1} H(x) =c_2<\infty.
\eeqs 

For $x\in\R^n\setminus\{0\}$, we have $ H(x)=|x|^\alpha H(x/|x|)\in [c_1 |x|^\alpha,c_2 |x|^\alpha]$. Together with the fact $H(0)=0$, this yields
\beq\label{Hyal}
c_1 |x|^\alpha \le H(x)\le c_2 |x|^\alpha \text{ for all }x\in\R^n.
\eeq

 \begin{assumption}\label{assumpF}
We assume the followings.
\begin{enumerate}[label=\tnum]
 \item  The function $x\in \R^n\to H(x)Ax$ is locally Lipschitz.
\item   There is $t_*\ge 0$ such that the function $G(t,x)$ is continuous on $[t_*,\infty)\times \R^n$,  Lipschitz with respect to $x$ on any compact subsets of $[t_*,\infty)\times \R^n$.
\item There exist positive numbers $c_*, r_*, \delta$ such that 
\beq \label{Fcond}
 |G(t,x)| \le c_*|x|^{1+\alpha+\delta} \text{ for all  $t \ge t_*$, and all $x \in \R^n$  with $|x|\le r_*$.}
\eeq  
\end{enumerate}
 \end{assumption}

It follows \eqref{Fcond} that $G(t,0)=0$ for all $t\ge t_*$.
Hence, $y\equiv 0$ on $[t_*,\infty)$ is the trivial solution of \eqref{yReq}.

We first obtain the global existence and uniqueness of solutions of \eqref{yReq} with small initial data, and find their lower and upper bounds for all time.

\begin{theorem}\label{smallthm}
 There is $r_0>0$ such that if $y_0\in \R^n$ satisfies $0<|y_0|<r_0$, then there exists a unique solution $y(t)\in C^1([t_*,\infty),\R^n)$ of \eqref{yReq} on  $(t_*,\infty)$ and $y(t_*)=y_0$.

 Moreover, there are two positive constants  $C_1$ and $C_2$ such that 
 \beq\label{yplus}
 C_1(1+t)^{-1/\alpha}
\le  |y(t)|\le 
 C_2(1+t)^{-1/\alpha} \text{ for all $t\ge t_*$.}
\eeq
 \end{theorem}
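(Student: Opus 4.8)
The plan is to combine a standard local existence/uniqueness argument with an a priori estimate on $|y(t)|$ that both prevents blow-up and prevents the solution from leaving the small ball where the hypotheses on $G$ apply. First I would fix $r_* $ and $\delta$ from Assumption \ref{assumpF}, and choose $r_0 \le r_*$ small to be determined. For $0<|y_0|<r_0$, parts (i)--(ii) of Assumption \ref{assumpF} give a unique maximal $C^1$ solution on some interval $[t_*,T_{\max})$. The key is to show that as long as $|y(t)|\le r_*$ we in fact have a closed a priori bound $|y(t)|\le C_2(1+t)^{-1/\alpha}$ with $C_2$ depending only on $|y_0|$ and the structural constants; this bound keeps $|y|$ inside the ball, so $T_{\max}=\infty$ by the usual continuation criterion, and simultaneously yields the upper estimate in \eqref{yplus}. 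The lower bound is obtained by a matching differential inequality from below.

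The heart of the matter is the scalar ODE for $\rho(t):=|y(t)|^2$ (or, more conveniently, for $|y(t)|^{-\alpha}$, mimicking \eqref{dy3} in the basic case). Differentiating, using \eqref{yReq}, Cauchy--Schwarz, the two-sided bound \eqref{Hyal} on $H$, the bound \eqref{xAx}-type control on $x\cdot Ax$ via the eigenvalues — here one must be careful because $A$ is only diagonalizable, not symmetric, so instead of $x\cdot Ax\ge \Lambda_1|x|^2$ one works with the equivalent norm $|Sx|$ and the relation $A=S^{-1}A_0 S$, or one simply uses $|Ay|\le \|A\|_{\rm op}|y|$ for the upper estimate and a comparable lower bound after the change of variables $z=Sy$ — and the higher-order bound \eqref{Fcond} on $G$, one derives, for $t$ with $|y(t)|\le r_*$ small,
\beq\label{propineq}
\ddt\bigl(|y|^{-\alpha}\bigr) = \alpha H(y/|y|)\,\frac{y\cdot Ay}{|y|^2} - \alpha\,\frac{G(t,y)\cdot y}{|y|^{\alpha+2}},
\eeq
and hence $c_3 \le \ddt(|y|^{-\alpha}) \le c_4$ for positive constants $c_3,c_4$ once $r_0$ is small enough that the $G$-term (which is $\bigo(|y|^\delta)$) is dominated. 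Integrating \eqref{propineq} gives $|y(t_*)|^{-\alpha}+c_3(t-t_*)\le |y(t)|^{-\alpha}\le |y(t_*)|^{-\alpha}+c_4(t-t_*)$, which is exactly the two-sided power bound \eqref{yplus} after absorbing $t_*$ into the constants and replacing $t$ by $1+t$. To run this cleanly for a genuinely diagonalizable (non-symmetric) $A$, I would first pass to $z=Sy$, for which $z'=-H(S^{-1}z)A_0 z + S G(t,S^{-1}z)$; then $z\cdot A_0 z \ge \Lambda_1|z|^2$ holds honestly, $H(S^{-1}z)$ is still positively homogeneous of degree $\alpha$ and comparable to $|z|^\alpha$, and the forcing term still satisfies a bound of the form $c_*'|z|^{1+\alpha+\delta}$; the estimates \eqref{yplus} for $y$ then follow from $\|S\|_{\rm op}^{-1}|z|\le|y|\le\|S^{-1}\|_{\rm op}|z|$.

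The one subtlety requiring care is the \emph{order of quantifiers}: $r_0$ must be chosen before the solution, small enough (depending only on $\alpha,\delta,c_1,c_2,c_*,\|S\|,\Lambda_1$) that the dominated-$G$ inequality holds throughout the ball $\{|y|\le r_*\}$, and then a continuation/barrier argument shows the a priori bound is never violated, so the solution never exits the ball and exists globally. I expect this bootstrap — showing that the a priori estimate is self-improving and thereby simultaneously forces global existence and confines the trajectory — to be the main technical obstacle, though it is by now a routine maneuver; everything else (local theory from Assumption \ref{assumpF}(i)--(ii), the change of variables, the scalar comparison) is standard. Note that \eqref{yplus} also rigorously justifies the decay rate $t^{-1/\alpha}$ guessed heuristically in \eqref{yxip}--\eqref{heu0}.
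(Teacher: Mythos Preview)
Your proposal is correct and follows essentially the same route as the paper: reduce the general diagonalizable $A$ to the symmetric (indeed diagonal) case by the change of variables $z=Sy$, use local existence from Assumption~\ref{assumpF}(i)--(ii), and derive a two-sided differential inequality for $|y|^{-\alpha}$ (your \eqref{propineq} is exactly the paper's \eqref{dyalpha} after writing $H(y)=|y|^\alpha H(y/|y|)$), from which both global existence via a barrier argument and the power bounds \eqref{yplus} follow. The only cosmetic difference is that the paper first shows $\ddt|y|^2\le 0$ to confine the trajectory to the ball of radius $2r_0$ and conclude $T_{\max}=\infty$, and \emph{then} derives the sharp $|y|^{-\alpha}$ inequality for the power bounds, whereas you propose to run both steps through the single $|y|^{-\alpha}$ inequality; either ordering works.
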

\begin{proof}

\textit{Part 1.} Consider $A$ is symmetric first. 
Select $r_0>0$ such that
\beq\label{r0}
2r_0\le r_*\text{ and } c_* (2r_0)^\delta\le  c_1\Lambda_1. 
\eeq

Let $y_0$ be any vector in $\R^n$ with $0<|y_0|<r_0$.
There exists a maximal $T_{\max}\in(t_*,\infty]$ and a unique solution $y(t)\in C^1([t_*,T_{\max}),\R^n)$ of \eqref{yReq} on  $(t_*,T_{\max})$ with $y(t_*)=y_0$ such that
\beq\label{y2r}
|y(t)|< 2r_0\text{ for all $t\in[t_*,T_{\max}).$}
\eeq

We claim that $T_{\max}=\infty$. 
Suppose this is not true, i.e., $T_{\max}<\infty$. Then
\beq\label{lim2r}
\lim_{t\to T_{\max}^-} |y(t)|=2r_0.
\eeq

Taking the dot product of the equation with $y$ gives
\beqs
\frac12 \ddt |y|^2=-H(y)(Ay)\cdot y +G(t,y)\cdot y.
\eeqs 

By \eqref{y2r} and \eqref{r0}, $|y(t)|\le r_*$ on $[t_*,T_{\max})$. Together with \eqref{Fcond}, this implies 
\beq\label{Ginmax}
|G(t,y(t))\cdot y(t)|\le c_*|y(t)|^{2+\alpha+\delta}\text{ for all $t\in[t_*,T_{\max})$.}
\eeq

By \eqref{Hyal}, \eqref{xAx} and \eqref{Ginmax}, one has 
\beqs
\frac12 \ddt |y|^2\le - (c_1 \Lambda_1-c_*|y|^\delta)  |y|^{2+\alpha} \text{ for all $t\in(t_*,T_{\max})$.}
\eeqs

Because of \eqref{y2r} and \eqref{r0}, we have
$$ c_*|y(t)|^\delta \le c_* (2r_0)^\delta \le c_1 \Lambda_1 \text{ for all $t\in [t_*,T_{\max})$.}$$
 
Thus, $\ddt |y(t)|^2\le 0$ on $(t_*,T_{\max})$, which implies 
\beq \label{ydecrease}
|y(t)|\le |y_0|<r_0\text{ for all $t\in[t_*,T_{\max})$.}
\eeq 
This yields a contradiction to \eqref{lim2r}.
Therefore, $T_{\max}=\infty$.

We prove \eqref{yplus} now. 
By the uniqueness/backward uniqueness of the solutions of \eqref{yReq}, one has $y(t)\ne 0$ for all $t\in[t_*,\infty)$.
We calculate, for $t>t_*$,
\beq\label{dyalpha}
\ddt (|y|^{-\alpha})=\alpha \left(\frac{H(y)(Ay)\cdot y}{|y|^{\alpha+2}} - \frac{G(t,y)\cdot y}{|y|^{\alpha+2}}\right).
\eeq

Utilizing \eqref{Hyal}, \eqref{xAx} and \eqref{Ginmax} again, one has 
\beqs
\alpha( c_1 \Lambda_1  - c_* |y|^\delta)\le \ddt  (|y|^{-\alpha})\le  \alpha( c_2 \Lambda_n  +c_*|y|^\delta).
\eeqs

Let $a_1,a_2$ be any positive numbers such that $a_1< c_1\Lambda_1$ and $a_2> c_2\Lambda_n$. 
We select $r_0$ that satisfies the following additional condition
\beq\label{rsmaller}
c_* r_0^\delta\le \min\{  c_1\Lambda_1 -a_1,a_2-c_2\Lambda_n\}. 
\eeq

Together with \eqref{ydecrease}, we obtain 
$\alpha a_1 \le \ddt(|y|^{-\alpha})\le \alpha a_2$ for all $t> t_*$.  
Hence, 
\beq
 \label{yuplow}
\frac{1}{|y_0|^{-\alpha}+ \alpha a_2 (t-t_*)}\le  |y(t)|^\alpha \le \frac{1}{|y_0|^{-\alpha}+ \alpha a_1 (t-t_*)}
 \eeq
for all $t\ge t_*$.
Then the desired estimates in \eqref{yplus} follow.

\textit{Part 2.} Now, consider the general case. Let $A=S^{-1}A_0 S$ as in \eqref{Adiag}.
We make the change of variables $z=Sy$. Note that
\beq\label{Syz}
\|S^{-1}\|_{\rm op}^{-1} |y|\le |z|\le \|S\|_{\rm op}|y|.
\eeq

Then equation \eqref{yReq} with initial condition $y(t_*)=y_0$ is equivalent to
\beq\label{zeq}
z'=-\widetilde H(z)A_0z+\widetilde{G}(t,z), \quad z(t_*)=z_0\eqdef Sy_0,
\eeq
where 
\beq\label{HRz}
\widetilde H(z)=H(S^{-1}z) \text{ and } \widetilde {G}(t,z)=SG(t,S^{-1}z) \text{ for }z\in\R^n.
\eeq

Using the relations in \eqref{Syz}, we can apply the result in Part 1 to equation \eqref{zeq}. If $|z_0|\ne 0$ is sufficiently small, the unique solution $z(t)$ exists for all $t\ge t_*$, and satisfies
\beq\label{zpower}
C_1'(1+t)^{-1/\alpha}
\le  |z(t)|\le 
 C_2'(1+t)^{-1/\alpha}
\text { for all $t\ge t_*$,}
 \eeq
where $C_1',C_2'$ are two positive constants.
Consequently,  when $|y_0|\ne 0$ is sufficiently small, the unique solution $y(t)$ of \eqref{yReq} on $(t_*,\infty)$ with $y(t_*)=y_0$ is $S^{-1}z(t)$, for $t\ge t_*$, and, thanks to \eqref{Syz} and \eqref{zpower}, the estimates in \eqref{yplus} hold true. 
We omit the details.
\end{proof}

As a consequence of \eqref{yplus}, the solution $y(t)$ in Theorem \ref{smallthm} goes to zero as $t\to\infty$. This type of solutions will be the subject of our investigations in the next two sections.

\section{The case of symmetric matrix}\label{symcase}

\textit{Assume, throughout this section, that the matrix $A$ is symmetric.}

Together with Assumption \ref{assumpA}, this implies that the matrix $A$ is positive definite. 
Denote by $A^{1/2}$ the square root matrix of $A$, which is symmetric, positive definite and $(A^{1/2})^2=A$.

For $j=1,2,\ldots,d$, denote by $R_{\lambda_j}$ the orthogonal projection from $\R^n$ to the eigenspace of $A$ associated with the eigenvalue $\lambda_j$.
Then
\beq \label{Rcontract}
|R_{\lambda_j} x|\le |x| \text{ for $j=1,2\ldots,d$ and all $x\in\R^n$.}
\eeq 

Let the function $H$ satisfy Assumption \ref{Hf}.
Assume the following conditions on $G(t,x)$.

\begin{assumption}\label{newG}
The function $G(t,x)$ is continuous on $[t_*,\infty)\times \R^n$ for some $t_*\ge 0$,  and there exist numbers $T_*\ge t_*$ and $c_*, r_*, \delta>0$ such that 
\beq \label{Gcond}
 |G(t,x)| \le c_*|x|^{1+\alpha+\delta} \text{ for all  $t \ge T_*$, and all $x \in \R^n$  with $|x|\le r_*$.}
\eeq  
\end{assumption}

Assume $y(t)$ is a function in $C^1([t_*,\infty),\R^n)$ that solves equation \eqref{yReq} on $(t_*,\infty)$, $y(t)\ne 0$ for all $t\ge t_*$, and
\beq\label{yzerolim}
\lim_{t\to\infty} y(t)= 0.
\eeq

Recall that such a solution $y(t)$ exists under appropriate conditions as shown in Theorem \ref{smallthm}.
Moreover, this theorem essentially gives the following preliminary asymptotic estimates for $|y(t)|$.

\begin{lemma}\label{nzprop}
There exist numbers $\bar T> T_*$ and  $C_1,C_2>0$ such that
\beq\label{ypower}
C_1 t^{-1/\alpha} \le |y(t)|\le C_2 t^{-1/\alpha} \text{ for all $t\ge \bar T$.}
\eeq

Moreover, one has
\beq\label{yinfsup}
 \frac1{(\alpha c_2 \Lambda_n)^{1/\alpha}} \le \liminf_{t\to\infty} t^{1/\alpha}|y(t)|\le\limsup_{t\to\infty} t^{1/\alpha}|y(t)|\le  \frac1{(\alpha c_1 \Lambda_1)^{1/\alpha}}.
\eeq
\end{lemma}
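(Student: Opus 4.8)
The plan is to reprise the Riccati-type computation from the proof of Theorem~\ref{smallthm}, but carried out \emph{asymptotically} rather than with fixed constants, so as to pin down the sharp coefficients appearing in \eqref{yinfsup}. Since $y\in C^1$ and $y(t)\ne 0$, the scalar function $t\mapsto |y(t)|^{-\alpha}$ is $C^1$ on $(t_*,\infty)$, and differentiating exactly as in \eqref{dyalpha} gives
\[
\frac{d}{dt}\bigl(|y|^{-\alpha}\bigr)=\alpha\left(\frac{H(y)(Ay)\cdot y}{|y|^{\alpha+2}}-\frac{G(t,y)\cdot y}{|y|^{\alpha+2}}\right).
\]
By \eqref{Hyal} and \eqref{xAx} the first fraction lies in the interval $[c_1\Lambda_1,\,c_2\Lambda_n]$, while \eqref{yzerolim} gives $|y(t)|\le r_*$ for $t$ large, so \eqref{Gcond} together with the Cauchy--Schwarz inequality bounds the second fraction in absolute value by $c_*|y(t)|^\delta$, which tends to $0$ as $t\to\infty$ by \eqref{yzerolim} again.

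Hence, fixing $\varepsilon\in(0,c_1\Lambda_1)$, I would choose $T_\varepsilon\ge T_*$ such that
\[
\alpha(c_1\Lambda_1-\varepsilon)\le \frac{d}{dt}\bigl(|y|^{-\alpha}\bigr)\le \alpha(c_2\Lambda_n+\varepsilon)\quad\text{for all }t\ge T_\varepsilon,
\]
and then integrate from $T_\varepsilon$ to $t$ and invert to obtain
\[
\frac{1}{|y(T_\varepsilon)|^{-\alpha}+\alpha(c_2\Lambda_n+\varepsilon)(t-T_\varepsilon)}\le |y(t)|^\alpha\le \frac{1}{|y(T_\varepsilon)|^{-\alpha}+\alpha(c_1\Lambda_1-\varepsilon)(t-T_\varepsilon)}.
\]
Multiplying by $t$ and letting $t\to\infty$ shows $\liminf_{t\to\infty} t|y(t)|^\alpha\ge \bigl(\alpha(c_2\Lambda_n+\varepsilon)\bigr)^{-1}$ and $\limsup_{t\to\infty} t|y(t)|^\alpha\le \bigl(\alpha(c_1\Lambda_1-\varepsilon)\bigr)^{-1}$. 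Raising to the power $1/\alpha$ and letting $\varepsilon\downarrow 0$ gives precisely the two outer inequalities in \eqref{yinfsup}; the middle inequality $\liminf\le\limsup$ is automatic. Finally, since these limit inferior and superior are finite and strictly positive, any $C_1$ strictly below $(\alpha c_2\Lambda_n)^{-1/\alpha}$ and any $C_2$ strictly above $(\alpha c_1\Lambda_1)^{-1/\alpha}$ produce a $\bar T>T_*$ for which the two-sided bound \eqref{ypower} holds.

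I do not expect a serious obstacle: the statement is essentially the quantitative content of the a~priori estimate inside Theorem~\ref{smallthm}, rerun with the crude constants $a_1,a_2$ replaced by the optimal $c_1\Lambda_1$ and $c_2\Lambda_n$ up to an $\varepsilon$. The one point requiring a little attention is that Lemma~\ref{nzprop} is stated only under Assumption~\ref{newG} (continuity of $G$ and the growth bound \eqref{Gcond}), not under the full Assumption~\ref{assumpF}; but the argument above uses nothing beyond the differential identity for $|y|^{-\alpha}$, the bound \eqref{Gcond}, and the standing hypotheses $y\in C^1$, $y\ne 0$, $y(t)\to 0$, so no existence or uniqueness input is needed and the proof is self-contained. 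The only other mild subtlety is to perform the two one-sided limits in $\varepsilon$ at the very end, after the integration, rather than fixing $\varepsilon$ too early.
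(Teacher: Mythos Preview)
Your proof is correct and follows essentially the same approach as the paper: both compute $\frac{d}{dt}|y|^{-\alpha}$, bound the main term by $[c_1\Lambda_1,c_2\Lambda_n]$ and the error term by $c_*|y|^\delta\to 0$, integrate between a large time and $t$, and then let the slack parameter ($\varepsilon$ for you, $a_1,a_2$ in the paper) tend to its optimal value. The only cosmetic difference is that the paper first fixes a particular $a_1,a_2$ to record \eqref{ypower} and afterwards varies them to get \eqref{yinfsup}, whereas you derive \eqref{yinfsup} first and read off \eqref{ypower} from it; your observation that only Assumption~\ref{newG} (not the Lipschitz or existence parts of Assumption~\ref{assumpF}) is needed is also correct.
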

\begin{proof}
Suppose $a_1,a_2>0$ are  two numbers such that $a_1 < c_1\Lambda_1 \le c_2\Lambda_n<a_2$. 

Take $r_0>0$ that satisfies  $r_0\le r_*$ and \eqref{rsmaller}.
Because of the limit \eqref{yzerolim}, there is a number $\bar T> T_*$ such that 
\beqs
|y(t)|<r_0 \text{ for all $t\ge \bar T$.}
\eeqs

Performing the same calculations as in the proof of Theorem \ref{smallthm} from \eqref{dyalpha} to the end of Part 1 with $\bar T$ replacing $t_*$, we obtain, similar to  \eqref{yuplow}, 
\beq\label{newy}
\frac{1}{(|y(\bar T)|^{-\alpha}+ \alpha a_2 (t-\bar T))^{1/\alpha}} 
\le |y(t)|\le 
\frac{1}{(|y(\bar T)|^{-\alpha}+ \alpha a_1 (t-\bar T))^{1/\alpha}}.
 \eeq
  for all $t\ge \bar T$. Because $\bar T>0$ and $|y(\bar T)|>0$, we derive from \eqref{newy} the estimates in \eqref{ypower} for some positive constants $C_1,C_2$ depending on $a_1,a_2$.
By the particular choice $a_1=c_1\Lambda_1/2$ and $a_2=c_2\Lambda_n+1$, the numbers $\bar T$ and $C_1,C_2$ are now fixed in \eqref{ypower}.

It follows \eqref{newy} that
\beqs
 \frac1{(\alpha a_2)^{1/\alpha}} \le \liminf_{t\to\infty} t^{1/\alpha}|y(t)|\le\limsup_{t\to\infty} t^{1/\alpha}|y(t)|\le  \frac1{(\alpha a_1)^{1/\alpha}},
\eeqs
for any numbers $a_1\in(0,c_1 \Lambda_1)$ and $a_2\in(c_2 \Lambda_n,\infty)$.
Thus, we obtain \eqref{yinfsup}. 
\end{proof}

As a consequence of \eqref{ypower} and \eqref{Hyal}, one has, for all $t\ge \bar T$,
\beq\label{Hyt}
C_3 t^{-1}\le H(y(t)) \le C_4 t^{-1}, \text{ where $C_3=c_1C_1^\alpha$ and $C_4=c_2C_2^\alpha$.} 
\eeq

For $t\ge t_*$, define
\beq\label{quot}
\lambda(t)=\frac{|A^{1/2}y(t)|^2}{|y(t)|^2}=\frac{y(t)\cdot Ay(t)}{|y(t)|^2} \text{ and }   
 v(t)=\frac{y(t)}{|y(t)|} .
\eeq

Then $\lambda\in C^1([t_*,\infty))$ and, thanks to \eqref{xAx},  
 \beq\label{lest}
 \Lambda_1 \le  \lambda(t)\le \Lambda_n\le \|A\| \text{ for all $t\ge t_*$.}
 \eeq 
  Hence,
 \beq\label{llimbound}
\Lambda_1 \le \liminf_{t\to\infty} \lambda(t)\le \limsup_{t\to\infty}\lambda(t)\le \Lambda_n.
 \eeq
 
 Also, $v\in C^1([t_*,\infty),\R^n)$ and 
 \beq\label{vone}
 |v(t)|=1\text{ for all $t\ge t_*$.}
 \eeq
 
\begin{lemma}\label{lem1}
The quotient $\lambda(t)$ converges, as $t\to\infty$, to an eigenvalue $\Lambda$ of $A$.
\end{lemma}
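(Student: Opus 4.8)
The plan is to adapt the Foias--Saut analysis of the Dirichlet quotient. Since $A$ is symmetric and $|v|\equiv 1$, we have $\lambda=v\cdot Av=|A^{1/2}v|^2$, and the idea is to produce a differential identity for $\lambda(t)$ whose leading term has a definite sign, with an integrable remainder. First I would differentiate $v=y/|y|$, using \eqref{yReq} and $|y|'=v\cdot y'$, to obtain
\beq\label{eq:vp}
v'=-H(y)\,(Av-\lambda v)+\tilde g,\qquad \tilde g(t)=g(t)-\bigl(v\cdot g(t)\bigr)v,\ \ g(t)=\frac{G(t,y(t))}{|y(t)|}.
\eeq
By Assumption \ref{newG} (applicable for $t\ge\bar T$, where $|y(t)|<r_*$ by Lemma \ref{nzprop}) and the upper bound in \eqref{ypower}, $|g(t)|\le c_*|y(t)|^{\alpha+\delta}=\bigo(t^{-1-\delta/\alpha})$, hence $|\tilde g(t)|\le 2|g(t)|=\bigo(t^{-1-\delta/\alpha})$.

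Next, from $\lambda=v\cdot Av$ and $A^{\rm T}=A$ I would compute $\lambda'=2v'\cdot Av$ and substitute \eqref{eq:vp}. A direct computation using $|v|=1$ and $v\cdot Av=\lambda$ gives $(Av-\lambda v)\cdot Av=|Av|^2-\lambda^2=|Av-\lambda v|^2$, so that
\beq\label{eq:lamp}
\lambda'(t)=-2H(y(t))\,\bigl|Av(t)-\lambda(t)v(t)\bigr|^2+h(t),\qquad |h(t)|\le 2\|A\|_{\rm op}|\tilde g(t)|=\bigo(t^{-1-\delta/\alpha}).
\eeq
In particular $h$ is integrable on $[\bar T,\infty)$, so $t\mapsto\lambda(t)-\int_{\bar T}^t h(s)\,\d s$ is non-increasing; being bounded (by \eqref{lest} together with the convergence of $\int_{\bar T}^\infty |h|$), it has a limit, and therefore $\lambda(t)\to\Lambda$ for some $\Lambda$, with $\Lambda\in[\Lambda_1,\Lambda_n]$ by \eqref{llimbound}.

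It remains to see that $\Lambda$ is an eigenvalue of $A$. Integrating \eqref{eq:lamp} and using that $\lambda$ has a finite limit while $\int h$ converges, I would conclude
\beqs
\int_{\bar T}^{\infty} H(y(s))\,\bigl|Av(s)-\lambda(s)v(s)\bigr|^2\,\d s<\infty.
\eeqs
Since $H(y(s))\ge C_3 s^{-1}$ by \eqref{Hyt}, the integrand dominates $C_3\,s^{-1}|Av(s)-\lambda(s)v(s)|^2$, whose integrability forces $\liminf_{t\to\infty}|Av(t)-\lambda(t)v(t)|=0$. Choosing $t_k\to\infty$ along which this quantity tends to $0$, then (by compactness of the unit sphere) a further subsequence with $v(t_k)\to v_*$, $|v_*|=1$, and passing to the limit using $\lambda(t_k)\to\Lambda$ and continuity of $x\mapsto Ax$, yields $Av_*=\Lambda v_*$ with $v_*\neq0$; hence $\Lambda\in\sigma(A)$.

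The one point requiring care is the integrability of the remainder $h$ in \eqref{eq:lamp}: this hinges on the strict exponent gap $\delta>0$ in \eqref{Gcond} combined with the already-established sharp rate $|y(t)|=\bigo(t^{-1/\alpha})$ of Lemma \ref{nzprop}. Without the extra factor $t^{-\delta/\alpha}$ the error term would only be $\bigo(1/t)$, non-integrable, and the monotonicity argument for the convergence of $\lambda$ would break down. Everything else is routine bookkeeping.
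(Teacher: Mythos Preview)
Your proof is correct and follows essentially the same approach as the paper: derive the key identity $\lambda'=-2H(y)|Av-\lambda v|^2+h$ with $|h(t)|=\bigo(t^{-1-\delta/\alpha})$, use the integrability of $h$ to obtain convergence of $\lambda(t)$, and then exploit the finiteness of $\int H(y)|Av-\lambda v|^2$ together with the lower bound $H(y(t))\ge C_3 t^{-1}$ and compactness of the unit sphere to conclude $\Lambda\in\sigma(A)$. The only cosmetic differences are that the paper computes $\lambda'$ directly from \eqref{lameq} rather than via $v'$, and argues convergence by a $\limsup\le\liminf$ comparison rather than your (equally valid) monotonicity of $\lambda(t)-\int_{\bar T}^t h$.
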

\begin{proof}
We find a differential equation for $\lambda(t)$. Using the fact that $A$ is symmetric, we have, for $t>t_*$,
\begin{align*}
\lambda'(t)
&=\frac1{|y|^2} \ddt |A^{1/2}y|^2-\frac{|A^{1/2}y|^2}{|y|^4}\ddt |y|^2
=\frac2{|y|^2} y' \cdot Ay-\frac{2|A^{1/2}y|^2}{|y|^4} y' \cdot y.
\end{align*}

Hence,
\beq\label{lameq}
\lambda'(t)=\frac2{|y|^2} y' \cdot (Ay-\lambda y).
\eeq

We rewrite equation \eqref{yReq} as
\beqs
y' =-H(y)(Ay-\lambda y) - \lambda H(y)y+G(t,y).
\eeqs

Using this expression of $y'$ in \eqref{lameq} yields
\beq\label{prelh}
\lambda'(t)
=-\frac{2H(y)}{|y|^2} |Ay-\lambda y|^2 - \frac{2\lambda H(y)}{|y|^2} y\cdot (Ay-\lambda y)+h(t),
\eeq
where
\beqs
h(t)=\frac2{|y(t)|^2}G(t,y(t))\cdot(Ay(t)-\lambda(t) y(t)).
\eeqs

Note that the second term on the right-hand side of \eqref{prelh} vanishes thanks to the fact
$ y\cdot(Ay-\lambda y)=0$.
Then equation \eqref{prelh}  reduces to
\beq\label{lh}
\lambda'(t)
=-2H(y)|Av-\lambda v|^2 +h(t).
\eeq

Using \eqref{Gcond}, \eqref{lest}, \eqref{vone} and, then,  \eqref{ypower}, we estimate  
\beq\label{oh}
|h(t)|\le 2c_*|y(t)|^{\alpha+\delta} (2 \|A\|)\le C_5 t^{-1-\delta/\alpha},
\eeq
for all $t\ge \bar T$, where $C_5=4c_*\|A\| C_2^{\alpha+\delta}$. 

For $t'>t\ge \bar T$, integrating equation \eqref{lh} from $t$ to $t'$ gives
\beq\label{llH}
\lambda(t')-\lambda(t)+2\int_t^{t'}H(y(\tau))|Av(\tau)-\lambda(\tau) v(\tau)|^2\d\tau =\int_t^{t'}h(\tau)\d\tau.
\eeq

Note from \eqref{oh} that 
\beqs
\left|\int_t^{t'}h(\tau)\d\tau\right|\le \frac{\alpha C_5}{\delta}t^{-\delta/\alpha}.
\eeqs

Thus, taking the limit superior of \eqref{llH}, as $t'\to\infty$, yields
\beq\label{llH2}
\limsup_{t'\to\infty}\lambda(t')\le \lambda(t)+\frac{\alpha C_5}{\delta}t^{-\delta/\alpha}<\infty.
\eeq

Then taking the limit inferior of \eqref{llH2}, as $t\to\infty$, gives
\beqs
\limsup_{t'\to\infty}\lambda(t')\le \liminf_{t\to\infty}  \lambda(t).
\eeqs

This and \eqref{llimbound} imply $\lambda(t)$ converges as $t\to\infty$, and 
\beq\label{limlam} 
\lim_{t\to\infty}\lambda(t)=\Lambda\in [\Lambda_1,\Lambda_n].
\eeq

Next, we prove that  $\Lambda$ is an eigenvalue of $A$.
Using property \eqref{limlam} in \eqref{llH} and the Cauchy criterion, as $t,t'\to\infty$, we obtain
\beq\label{Hfin}
\int_{t_*}^\infty H(y(\tau))|Av(\tau)-\lambda(\tau) v(\tau)|^2\d\tau<\infty.
\eeq

\textbf{Claim.} $\forall\varep>0,\exists t\ge\max\{\bar T,1/\varep\}:|Av(t)-\lambda(t)v(t)|<\varep$.

We accept this claim momentarily. Then there exists a sequence $t_j\to\infty$, as $j\to\infty$, such that 
\beq \label{Avj}
\lim_{j\to\infty}|Av(t_j)-\lambda(t_j)v(t_j)|= 0.
\eeq 

Because of \eqref{vone} and by taking a subsequence of $(t_j)_{j=1}^\infty$, we can assume $v(t_j)\to \bar v\in\R^n$, as $j\to\infty$,  with $|\bar v|=1$. We already have from \eqref{limlam} that $\lambda(t_j)\to \Lambda$. 
Then the limit  \eqref{Avj} gives
$A\bar v=\Lambda\bar v$.
Thus, $\Lambda$ is an eigenvalue of $A$.

Finally, we prove the Claim. Suppose the Claim is not true. Then 
\beq\label{anticlaim} 
\exists\varep_0>0,\forall t\ge \max\{\bar T,1/\varep_0\}: |Av(t)-\lambda(t)v(t)|\ge \varep_0.
\eeq

Let $T=\max\{\bar T,1/\varep_0\}$. Combining \eqref{anticlaim} with property \eqref{Hyt}, we have
\beqs
\int_{T}^\infty H(y(\tau))|Av(\tau)-\lambda(\tau) v(\tau)|^2\d\tau
\ge \int_{T}^\infty C_3\tau^{-1} \varep_0^2 \d\tau=\infty,
\eeqs
which contradicts \eqref{Hfin}. Hence, the  Claim is true and the proof of Lemma \ref{lem1} is complete.
\end{proof}

\emph{For the remainder of this section, $\Lambda$ is the eigenvalue in Lemma \ref{lem1}.}

\begin{lemma}\label{lem2}
There is $\varep>0$ such that 
\beq\label{remv}
| ({\rm Id}-R_{\Lambda})v(t)|=\bigo(t^{-\varep}) \text{ as $t\to\infty$.}
 \eeq
\end{lemma}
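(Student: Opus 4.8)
The plan is to control the component of $v(t)$ orthogonal to the eigenspace $E_\Lambda$ of $A$ associated with $\Lambda$ by deriving a differential inequality for the scalar quantity $w(t) \eqdef |(\mathrm{Id}-R_\Lambda)v(t)|^2$. First I would write the equation for $v = y/|y|$: differentiating and using \eqref{yReq} gives
\beqs
v' = -H(y)\bigl(Av - \lambda(t) v\bigr) + \frac{1}{|y|}\bigl(\mathrm{Id} - v\otimes v\bigr)G(t,y),
\eeqs
since the $-\lambda H(y)y$ piece and the radial part of $-H(y)Ay$ cancel against the normalization, exactly as in the computation leading to \eqref{lh}. Because $R_\Lambda$ commutes with $A$ and is an orthogonal projection, applying $(\mathrm{Id}-R_\Lambda)$ and pairing with $(\mathrm{Id}-R_\Lambda)v$ should produce
\beqs
\tfrac12 w'(t) = -H(y)\,(\mathrm{Id}-R_\Lambda)v \cdot (A - \lambda(t))(\mathrm{Id}-R_\Lambda)v + (\text{error from }G).
\eeqs

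The key spectral point is the gap estimate: on the range of $\mathrm{Id}-R_\Lambda$, the symmetric matrix $A-\Lambda$ satisfies $x\cdot(A-\Lambda)x \ge \mu |x|^2$ for some $\mu>0$, where $\mu$ is the distance from $\Lambda$ to the nearest other eigenvalue. Since $\lambda(t)\to\Lambda$ by Lemma \ref{lem1}, for $t$ large we get $x\cdot(A-\lambda(t))x \ge (\mu/2)|x|^2$ on that subspace. Combining this with the lower bound $H(y(t)) \ge C_3 t^{-1}$ from \eqref{Hyt}, and estimating the $G$-term using \eqref{Gcond}, \eqref{ypower} and $|(\mathrm{Id}-R_\Lambda)v|\le 1$ so that it is $\bigo(|y|^{\alpha+\delta}) = \bigo(t^{-1-\delta/\alpha})$, I would arrive at a differential inequality of the form
\beqs
w'(t) \le -\frac{c}{t}\,w(t) + \frac{C}{t^{1+\delta/\alpha}} \quad\text{for all } t \ge T_1,
\eeqs
for some constants $c, C > 0$ (with $c = C_3\mu > 0$). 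A Gronwall/integrating-factor argument with integrating factor $t^{c}$ then yields $w(t) = \bigo(t^{-\min\{c,\,\delta/\alpha\}})$, hence $|(\mathrm{Id}-R_\Lambda)v(t)| = \bigo(t^{-\varep})$ with $\varep = \tfrac12\min\{c,\delta/\alpha\}>0$, which is exactly \eqref{remv}.

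The main obstacle I anticipate is the absence of a clean \emph{uniform} spectral gap lower bound when $t$ is not yet large: the coercivity constant $\mu/2$ only kicks in once $\lambda(t)$ is within $\mu/2$ of $\Lambda$, so the inequality holds only on $[T_1,\infty)$, and one has to be slightly careful that the decay of $H(y(t))\sim t^{-1}$ is \emph{not faster} than $1/t$ — otherwise the integrating factor $t^c$ would be replaced by something subpolynomial and the argument would only give a weak (e.g. logarithmic) rate. It is precisely the two-sided bound $C_3 t^{-1}\le H(y(t))\le C_4 t^{-1}$ in \eqref{Hyt} that makes the coefficient of $w$ genuinely of order $1/t$ with a definite positive constant, so the Gronwall step produces a true power decay. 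A secondary technicality is justifying that $w$ is differentiable and that the pairing identity is valid (it is, since $y\in C^1$, $y(t)\ne 0$, and $R_\Lambda$ is a constant linear map), but this is routine given the hypotheses on $y$.
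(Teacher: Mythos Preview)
Your argument has a genuine gap at the ``key spectral point.'' The claim that on the range of $\mathrm{Id}-R_\Lambda$ one has $x\cdot(A-\Lambda)x \ge \mu|x|^2$ is false in general: the eigenvalue $\Lambda$ produced by Lemma~\ref{lem1} need not be the \emph{smallest} eigenvalue of $A$. If some $\lambda_j<\Lambda$ exists, then for $x$ in the eigenspace of $\lambda_j$ one has $x\cdot(A-\Lambda)x=(\lambda_j-\Lambda)|x|^2<0$, so $A-\Lambda$ is indefinite on $\mathrm{range}(\mathrm{Id}-R_\Lambda)$ and your differential inequality $w'\le -\tfrac{c}{t}w+\tfrac{C}{t^{1+\delta/\alpha}}$ does not follow. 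In the equation for $v$, the term $-H(y)(A-\lambda(t))$ acts as a \emph{contraction} on the eigenspaces with $\lambda_j>\Lambda$ but as an \emph{expansion} on those with $\lambda_j<\Lambda$; lumping them together into $w(t)$ destroys the sign.

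The paper's proof resolves this by decomposing $(\mathrm{Id}-R_\Lambda)v=\sum_{\lambda_j\ne\Lambda}R_{\lambda_j}v$ and treating each eigenspace separately. For $\lambda_j>\Lambda$ one gets exactly your Gronwall inequality for $|R_{\lambda_j}v|^2$ and hence $|R_{\lambda_j}v(t)|=\bigo(t^{-\theta\mu/2})$. For $\lambda_j<\Lambda$ the inequality goes the other way, $\frac{d}{dt}|R_{\lambda_j}v|^2\ge \mu H(y)|R_{\lambda_j}v|^2 - C t^{-1-2\delta/\alpha}$; one then integrates forward with the \emph{growing} integrating factor, uses the a priori bound $|R_{\lambda_j}v(t)|\le 1$ together with $\int^\infty H(y)\,d\tau=\infty$ to kill the boundary term at infinity, and reads off $|R_{\lambda_j}v(t_0)|^2=\bigo(t_0^{-2\delta/\alpha})$. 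This forward-integration trick for the ``unstable'' directions is the missing ingredient in your plan.
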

\begin{proof}
We find a differential  equation for $v(t)$. We compute
\begin{align*}
 v' &=\frac1{|y|}y' -\frac1{|y|^3}(y' \cdot y)y\\
 &=\frac1{|y|}(-H(y)Ay+G(t,y))-\frac1{|y|^3}((-H(y)Ay+G(t,y))\cdot y)y\\
 &=-H(y)Av+H(y)|A^{1/2}v|^2v +g(t),
\end{align*}
where $g:[t_*,\infty)\to\R^n$ is defined by
\beqs
g(t)=\frac{1}{|y(t)|}G(t,y(t)) -\frac{G(t,y(t))\cdot y(t)}{|y(t)|^3}y(t).
\eeqs

Thanks to the fact $|A^{1/2}v|^2=\lambda(t)$, we have
\beq \label{dv}
 v'=-H(y)(Av-\lambda v)+g(t)\text{ for all $t>t_*$.}
\eeq 

Using property \eqref{Gcond} of $G(t,y)$, one can estimate
\beq\label{gMy}
|g(t)|\le 2 c_* |y(t)|^{\alpha+\delta}\text{ for all $t\ge T_*$.}
\eeq

We write 
\beq\label{InR} 
({\rm Id}-R_{\Lambda})v=\sum_{1\le j\le d, \lambda_j\ne  \Lambda } R_{\lambda_j}v.
\eeq
We will estimate each $|R_{\lambda_j}v(t)|$ on the right-hand side of \eqref{InR}. 

Define $\mu=\min \{ |\lambda_j-\Lambda| : 1\le j\le d, \lambda_j\ne  \Lambda\}>0$.

Let $\lambda_j\in\sigma(A)\setminus\{\Lambda\}$.
Applying $R_{\lambda_j}$ to equation \eqref{dv} and taking the dot product with $R_{\lambda_j}v$ yield
\beq\label{Rnorm}
\frac12 \ddt |R_{\lambda_j}v|^2
 =-H(y)(\lambda_j-\lambda)| R_{\lambda_j}v|^2 +R_{\lambda_j} g(t)\cdot R_{\lambda_j}v.
\eeq

By  Cauchy--Schwarz's inequality, then Cauchy's inequality, and property \eqref{Hyal}, we have
\begin{align*}
|R_{\lambda_j}g(t)\cdot R_{\lambda_j}v|
&\le 2c_*  |y|^{\alpha+\delta}|R_{\lambda_j}v|
\le \frac\mu4 H(y)| R_{\lambda_j}v|^2 + \frac{4c_*^2|y|^{2\alpha+2\delta}}{\mu H(y)}\\
&\le \frac\mu4 H(y)| R_{\lambda_j}v|^2 + \frac{4c_*^2}{\mu c_1}|y|^{\alpha+2\delta}.
\end{align*}

Together with the use of \eqref{ypower} to estimate the last norm $|y(t)|$, this implies, for $t\ge \bar T$,
\beq\label{abso}
|R_{\lambda_j}g(t)\cdot R_{\lambda_j}v|
\le \frac\mu4 H(y)| R_{\lambda_j}v|^2 +\frac{C_6}2 t^{-1-2\delta/\alpha},
\text{ with } C_6= \frac{8c_*^2 C_2^{\alpha+2\delta}}{\mu c_1}.
\eeq

\medskip
\noindent\underline{\textit{Case $\lambda_j>\Lambda$.}} In this case, combining \eqref{Rnorm} and \eqref{abso} yields, for $t\ge \bar T$,
\begin{align*}
\frac12 \ddt |R_{\lambda_j}v|^2
 &\le -(\lambda_j-\lambda-\frac\mu4) H(y)| R_{\lambda_j}v|^2 +\frac{C_6}2 t^{-1-2\delta/\alpha} .
\end{align*}

Note, as $t\to\infty$, that  $$\lambda_j-\lambda(t)-\frac\mu4\to \lambda_j-\Lambda-\frac\mu4\ge \mu-\frac\mu4=\frac{3\mu}4.$$ 

One has  $\lambda_j-\lambda(t)-\frac\mu4 \ge \frac\mu2$  for $t$ sufficiently large.
Thus, for sufficiently large $t$,
\beq\label{dRlarge}
 \ddt |R_{\lambda_j}v|^2
 \le - \mu H(y)| R_{\lambda_j}v|^2 +C_6t^{-1-2\delta/\alpha} .
\eeq

Below, $T\in[\bar T,\infty)$ is fixed and can be taken sufficiently large. 

Let $t$ and $t_0$ be any numbers in $[T,\infty)$ with  $t>t_0$.
 It follows \eqref{dRlarge} that 
\beq\label{RHint}
  |R_{\lambda_j}v(t)|^2
 \le e^{-\mu\int_{t_0}^t H(y(\tau))\d\tau} | R_{\lambda_j}v(t_0)|^2 + C_6\int_{t_0}^t e^{-\mu\int_{\tau}^t H(y(s))\d s}\tau^{-1-2\delta/\alpha}\d\tau .
\eeq

Regarding the first inequality in \eqref{Hyt}, we fix  a number $\theta>0$ such that 
$$\theta\le C_3\text{ and } \theta\mu<2\delta/\alpha.$$ 
Then 
\beq \label{Hthe}
H(y(t))\ge \theta t^{-1}\text{ for all $t\ge T$.}
\eeq 

Utilizing this estimate in \eqref{RHint} gives, for $t\ge T$,
\begin{align*}
  |R_{\lambda_j}v(t)|^2
 &\le e^{-\theta\mu\int_{t_0}^t \tau^{-1}\d\tau} | R_{\lambda_j}v(t_0)|^2 
 + C_6\int_{t_0}^t e^{-\theta\mu\int_{\tau}^t s^{-1}\d s}\tau^{-1-2\delta/\alpha}\d\tau \\
  &= \frac{t_0^{\theta\mu}}{t^{\theta\mu}} | R_{\lambda_j}v(t_0)|^2 
  + C_6 \int_{t_0}^t \frac{\tau^{\theta\mu}}{t^{\theta\mu}}\tau^{-1-2\delta/\alpha}\d\tau \\
    &= \frac{t_0^{\theta\mu}}{t^{\theta\mu}} | R_{\lambda_j}v(t_0)|^2 
    +\frac{C_6}{t^{\theta\mu}(2\delta/\alpha-\theta\mu)}\left(  t_0^{\theta\mu-2\delta/\alpha}-t^{\theta\mu-2\delta/\alpha}\right).
\end{align*}

Thus,
\beq\label{Rlamv1}
  |R_{\lambda_j}v(t)|^2
    \le \frac{t_0^{\theta\mu}}{t^{\theta\mu}} | R_{\lambda_j}v(t_0)|^2 +\frac{C_6t_0^{\theta\mu-2\delta/\alpha}}{t^{\theta\mu}(2\delta/\alpha-\theta\mu)}.
\eeq

With $t_0$ fixed in \eqref{Rlamv1}, we obtain
\beq \label{Rv1}
|R_{\lambda_j}v(t)|=\bigo(t^{-\theta\mu/2}) \text{ as $t\to\infty$.}
\eeq 

\medskip
\noindent\underline{\textit{Case $\lambda_j<\Lambda$.}} Using \eqref{abso} to have a lower bound for  the last term in \eqref{Rnorm}, we have
\begin{align*}
\frac12 \ddt |R_{\lambda_j}v|^2
 &\ge (\lambda-\lambda_j-\frac\mu4) H(y)| R_{\lambda_j}v|^2 -\frac{C_6}2 t^{-1-2\delta/\alpha} .
\end{align*}

As $t\to\infty$,
$$\lambda(t)- \lambda_j -\frac\mu4\to \Lambda - \lambda_j -\frac\mu4\ge \mu-\frac\mu4=\frac{3\mu}4.$$ 

Thus, for  sufficiently large $t$, one has $\lambda(t)-\lambda_j-\frac\mu4 \ge \frac\mu2$, and hence,
\begin{align*}
\ddt |R_{\lambda_j}v|^2
 &\ge \mu H(y)| R_{\lambda_j}v|^2 -C_6t^{-1-2\delta/\alpha} .
\end{align*}

Again, we can take a number $T\in[\bar T,\infty)$ sufficiently large in the calculations below. 
Then, for any $t,t_0$ such that $t>t_0\ge T$, one has
\beq\label{eHR}
e^{-\mu\int_{t_0}^t H(y(\tau))\d\tau}  |R_{\lambda_j}v(t)|^2-  |R_{\lambda_j}v(t_0)|^2
 \ge -C_6 \int_{t_0}^t e^{-\mu\int_{t_0}^\tau H(y(s))\d s}\tau^{-1-2\delta/\alpha} \d\tau.
\eeq

Note that $\int_{t_0}^\infty H(y(\tau))\d\tau=\infty$ and $ |R_{\lambda_j}v(t)|\le |v(t)|=1$. Then
\beqs
\lim_{t\to\infty} e^{-\mu\int_{t_0}^t H(y(\tau))\d\tau}  |R_{\lambda_j}v(t)|^2=0.
\eeqs

Letting $t\to\infty$ in \eqref{eHR} and using \eqref{Hthe}  yield
\begin{align*}
|R_{\lambda_j}v(t_0)|^2 
 &\le C_6 \int_{t_0}^\infty e^{-\mu\int_{t_0}^\tau H(y(s))\d s}\tau^{-1-2\delta/\alpha} \d\tau\le C_6 \int_{t_0}^\infty \frac{t_0^{\theta\mu}}{\tau^{\theta\mu}}\tau^{-1-2\delta/\alpha} \d\tau
 = \frac{C_6}{\theta\mu+2\delta/\alpha} t_0^{-2\delta/\alpha}.
\end{align*}

Therefore, we obtain
\beq\label{Rv2} 
|R_{\lambda_j}v(t_0)| =\bigo(t_0^{-\delta/\alpha})\text{ as $t_0\to\infty$.}
\eeq 

\medskip
By the expression \eqref{InR} of $({\rm Id}-R_\Lambda)v(t)$, estimate \eqref{Rv1} of $|R_{\lambda_j}v(t)|$ for all  $\lambda_j>\Lambda$, and  estimate \eqref{Rv2} of $|R_{\lambda_j}v(t)|$  for all   $\lambda_j<\Lambda$, we obtain  \eqref{remv} with $\varep=\min\{\theta\mu/2,\delta/\alpha\}=\theta\mu/2$.
\end{proof}

Lemma \ref{lem2} results in the following estimates for $y(t)$ which refine \eqref{ypower}.

\begin{corollary}\label{cor1}
Let $\varep>0$ be as in Lemma \ref{lem2}. Then 
\beq\label{remy}
|({\rm Id}-R_\Lambda)y(t)|=\bigo(t^{-1/\alpha-\varep})\text{ as $t\to\infty$,}
\eeq
and there exist numbers $T_0\ge \bar T$ and $C_7,C_8>0$ such that
\beq\label{RLy}
C_7 t^{-1/\alpha}\le |R_\Lambda y(t)|\le C_8 t^{-1/\alpha} \text{ for all $t\ge T_0$.}
\eeq
\end{corollary}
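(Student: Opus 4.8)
The plan is to derive both estimates directly from Lemma \ref{lem2} and the preliminary power bounds of Lemma \ref{nzprop}, exploiting two elementary facts about $R_\Lambda$: it is linear, and since $\Lambda$ is an eigenvalue of $A$ (so $R_\Lambda=R_{\lambda_j}$ for the index $j$ with $\lambda_j=\Lambda$), it is an orthogonal projection, hence a contraction satisfying the Pythagorean identity $|x|^2=|R_\Lambda x|^2+|({\rm Id}-R_\Lambda)x|^2$ for all $x\in\R^n$.

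For \eqref{remy}, I would start from $y(t)=|y(t)|\,v(t)$ and linearity of $R_\Lambda$ to write $({\rm Id}-R_\Lambda)y(t)=|y(t)|\,({\rm Id}-R_\Lambda)v(t)$. Taking norms and combining the upper bound $|y(t)|\le C_2 t^{-1/\alpha}$ from \eqref{ypower} with $|({\rm Id}-R_\Lambda)v(t)|=\bigo(t^{-\varep})$ from \eqref{remv} yields
\beqs
|({\rm Id}-R_\Lambda)y(t)|=|y(t)|\,|({\rm Id}-R_\Lambda)v(t)|=\bigo(t^{-1/\alpha}\cdot t^{-\varep})=\bigo(t^{-1/\alpha-\varep}),
\eeqs
which is precisely \eqref{remy}.

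For \eqref{RLy}, the upper bound is immediate: by \eqref{Rcontract} and \eqref{ypower}, $|R_\Lambda y(t)|\le |y(t)|\le C_2 t^{-1/\alpha}$, so one may take $C_8=C_2$. For the lower bound, I would use the Pythagorean identity in the form $|R_\Lambda y(t)|^2=|y(t)|^2-|({\rm Id}-R_\Lambda)y(t)|^2=|y(t)|^2\bigl(1-|({\rm Id}-R_\Lambda)v(t)|^2\bigr)$. Since \eqref{remv} gives $|({\rm Id}-R_\Lambda)v(t)|\to 0$ as $t\to\infty$, there is a number $T_0\ge\bar T$ with $|({\rm Id}-R_\Lambda)v(t)|^2\le 1/2$ for all $t\ge T_0$; then, using the lower bound in \eqref{ypower},
\beqs
|R_\Lambda y(t)|^2\ge \tfrac12\,|y(t)|^2\ge \tfrac12\,C_1^2\, t^{-2/\alpha}\quad\text{for all }t\ge T_0,
\eeqs
so \eqref{RLy} holds with $C_7=C_1/\sqrt2$ and $C_8=C_2$.

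I do not expect a genuine obstacle: the corollary is a formal consequence of the decomposition $y=|y|v$, the smallness of the transversal component $({\rm Id}-R_\Lambda)v$ from Lemma \ref{lem2}, and the two-sided power bound on $|y|$. The only point requiring a little care is choosing $T_0$ large enough that $({\rm Id}-R_\Lambda)v$ is quantitatively small (squared norm at most $1/2$, say), which is what forces $R_\Lambda y$ to retain a fixed positive fraction of the full norm $|y|$ and hence to inherit its lower decay rate $t^{-1/\alpha}$.
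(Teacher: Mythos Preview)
Your proof is correct and follows essentially the same approach as the paper: both obtain \eqref{remy} from $y=|y|v$ together with \eqref{ypower} and \eqref{remv}, and both deduce \eqref{RLy} by comparing $|R_\Lambda y|$ to $|y|$ via the already-established smallness of the transversal part. The only cosmetic difference is that for \eqref{RLy} the paper uses the triangle inequality $|R_\Lambda y|\gtrless |y|\mp|({\rm Id}-R_\Lambda)y|$ rather than your Pythagorean identity, but the two arguments are equivalent here.
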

\begin{proof}
On the one hand, we have from \eqref{ypower} and \eqref{remv} that
\beqs
|({\rm Id}-R_\Lambda)y(t)|=|y(t)|\cdot  |({\rm Id}-R_\Lambda)v(t)|=\bigo(t^{-1/\alpha}\cdot t^{-\varep}),
\eeqs
which proves \eqref{remy}. On the other hand, by the triangle inequality and \eqref{ypower}, one has
\begin{align*}
|R_\Lambda y(t)|&\le |y(t)|+|({\rm Id}-R_\Lambda)y(t)|\le C_2 t^{-1/\alpha}+|({\rm Id}-R_\Lambda)y(t)|,\\
|R_\Lambda y(t)|&\ge |y(t)|-|({\rm Id}-R_\Lambda)y(t)|\ge  C_1 t^{-1/\alpha}-|({\rm Id}-R_\Lambda)y(t)|.
\end{align*}

These inequalities and estimate \eqref{remy} for $|({\rm Id}-R_\Lambda)y(t)|$ imply the desired lower and upper bounds for $|R_\Lambda y(t)|$ in \eqref{RLy} when $t$ is sufficiently large.
\end{proof}

\begin{lemma}\label{lem4}
There exists a unit vector $v_*\in\R^n$ such that
\beq\label{RLv}
|R_{\Lambda}v(t)-v_*|=\bigo(t^{-\varep}),\text{ as $t\to\infty$, for some $\varep>0$,}
\eeq
\beq\label{vlim}
|v(t)-v_*|=\bigo(t^{-\varep}), \text{ as $t\to\infty$, for some $\varep>0$.}
\eeq

Consequently, one has
\beq\label{Rvlim}
\lim_{t\to\infty} R_{\Lambda}v(t)=\lim_{t\to\infty} v(t)=v_*.
\eeq 
\end{lemma}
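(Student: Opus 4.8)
The plan is to study $w(t)\eqdef R_{\Lambda}v(t)$, the projection of the normalized solution onto the limiting eigenspace, show that it is nonzero with length tending to $1$, and then prove that its \emph{direction} converges by normalizing. Since $R_{\Lambda}$ is an orthogonal projection, the splitting $v=R_{\Lambda}v+({\rm Id}-R_{\Lambda})v$ is orthogonal, so by \eqref{vone},
\beqs
|w(t)|^2=1-|({\rm Id}-R_{\Lambda})v(t)|^2 .
\eeqs
By Lemma \ref{lem2} the right-hand side tends to $1$; fix $T_1\ge\bar T$ with $|w(t)|\ge 1/2$ for $t\ge T_1$, so $w(t)\ne 0$ and $u(t)\eqdef w(t)/|w(t)|$ is a well-defined $C^1$ map into the unit sphere of the eigenspace on $[T_1,\infty)$. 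Applying $R_{\Lambda}$ to the equation \eqref{dv} for $v$, and using $R_{\Lambda}Ax=\Lambda R_{\Lambda}x$ for all $x\in\R^n$, I would obtain the identity
\beqs
w'(t)=H(y(t))\,(\lambda(t)-\Lambda)\,w(t)+R_{\Lambda}g(t)\qquad\text{for }t>t_* .
\eeqs
The crucial observation is that the first term on the right is a scalar multiple of $w(t)$, so it changes only the length of $w(t)$, not its direction.

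Next I would differentiate $u=w/|w|$. Using $u'=\frac1{|w|}\bigl(w'-(w'\cdot u)u\bigr)$ and the identity above, the radial term $H(y)(\lambda-\Lambda)w$ cancels, leaving
\beqs
u'(t)=\frac1{|w(t)|}\Bigl(R_{\Lambda}g(t)-\bigl(R_{\Lambda}g(t)\cdot u(t)\bigr)u(t)\Bigr),
\eeqs
so that $|u'(t)|\le 2|g(t)|/|w(t)|\le 4|g(t)|$ for $t\ge T_1$. By \eqref{gMy} and the decay bound $|y(t)|\le C_2 t^{-1/\alpha}$ from \eqref{ypower}, this gives $|u'(t)|=\bigo(t^{-1-\delta/\alpha})$, which is integrable on $[T_1,\infty)$. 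Hence $u(t)$ converges as $t\to\infty$ to some $v_*$ with $|v_*|=1$, and
\beqs
|u(t)-v_*|\le\int_t^\infty|u'(\tau)|\,\d\tau=\bigo(t^{-\delta/\alpha}).
\eeqs

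To conclude I would assemble the pieces. Writing $w=|w|u$ gives $w(t)-v_*=(|w(t)|-1)u(t)+(u(t)-v_*)$, and since $0\le 1-|w(t)|\le 1-|w(t)|^2=|({\rm Id}-R_{\Lambda})v(t)|^2=\bigo(t^{-2\varep})$ by Lemma \ref{lem2} (with $\varep>0$ the exponent there), estimate \eqref{RLv} follows. Then \eqref{vlim} comes from $v(t)-v_*=({\rm Id}-R_{\Lambda})v(t)+(w(t)-v_*)$ together with Lemma \ref{lem2}, and \eqref{Rvlim} is immediate from either.

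The step I expect to require the most thought is the reduction to the normalized vector $u(t)$. A direct variation-of-constants bound for $w(t)$ would need control of $\int^{\infty}H(y(\tau))|\lambda(\tau)-\Lambda|\,\d\tau$, i.e.\ of the rate at which $\lambda(t)\to\Lambda$ from Lemma \ref{lem1}; but one has no evident reason for $|\lambda(t)-\Lambda|$ to decay like a negative power of $t$ (an integral bound such as $\int^{\infty}H(y)|Av-\lambda v|^2\,\d\tau<\infty$ from \eqref{Hfin}, combined with $H(y)\approx t^{-1}$, does not suffice via Cauchy--Schwarz). The cancellation of the radial term upon passing to $u(t)$ removes this difficulty entirely, so recognizing that structure is the heart of the argument; verifying the cancellation identity, the bound $|u'|\le 4|g|$, and the final exponent bookkeeping are then routine.
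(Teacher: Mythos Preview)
Your argument is correct and is genuinely different from the paper's. The paper works directly with $w=R_\Lambda v$ via variation of constants: it first writes the scalar equation for $|w(t)|$ and solves it to extract the quantity $h(t_0)=\int_{t_0}^\infty H(y(\tau))(\Lambda-\lambda(\tau))\,\d\tau$, showing that $|h(t_0)|=\bigo(t_0^{-\varep_2})$ precisely because $|w(t)|\to 1$ (this is how the paper gets around the missing rate for $\lambda(t)-\Lambda$ that you flag). It then feeds this back into the variation-of-constants formula for the vector $w(t)$ to obtain the limit $v_*$ and the power rate. Your route sidesteps that two-pass structure entirely: by passing to the unit vector $u=w/|w|$ you kill the scalar term $H(y)(\lambda-\Lambda)w$ outright, so the only input needed is the pointwise bound $|g(t)|=\bigo(t^{-1-\delta/\alpha})$, which is already available. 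Your proof is shorter and avoids the delicate identity \eqref{hl0}--\eqref{hl1}; the paper's approach, on the other hand, makes the role of the integral $\int H(y)(\Lambda-\lambda)$ explicit and shows it is finite with a rate, which is a slightly stronger byproduct (though not used later). One minor sharpening: the projection of $R_\Lambda g$ orthogonal to $u$ has norm at most $|R_\Lambda g|$, so in fact $|u'(t)|\le |g(t)|/|w(t)|\le 2|g(t)|$, though your bound $4|g|$ is of course sufficient.
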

\begin{proof}
We prove \eqref{RLv} first. 
Let $\varep_0>0$ be such that \eqref{remv} holds for $\varep=\varep_0$.
Then one has
\beq\label{oneR}
\big| 1-|R_{\Lambda}v(t)| \big|
=\big| |v(t)|-|R_{\Lambda}v(t)|\big|\le |v(t)-R_{\Lambda}v(t)|=\bigo(t^{-\varep_0}).
\eeq

As a consequence of \eqref{oneR},  $ |R_{\Lambda}v(t)|\to 1$ as $t\to\infty$. 
Hence, $R_{\Lambda}v(t)\ne 0$ for large $t$.

Applying $R_\Lambda$ to equation \eqref{dv} yields, for  $t>t_*$,
\beq \label{dRv}
\ddt R_{\Lambda}v
 =-H(y)(\Lambda-\lambda) R_{\Lambda}v +R_\Lambda g(t).
\eeq

Then,  for large $t$,
\beq\label{RLveq}
\ddt |R_{\Lambda}v|
=\frac{1}{|R_{\Lambda}v|} (\ddt R_{\Lambda}v)\cdot R_{\Lambda}v
 =-H(y)(\Lambda-\lambda)| R_{\Lambda}v| +g_1(t),
\eeq
where
\beqs
g_1(t)=\frac1{|R_{\Lambda}v(t)|}R_\Lambda  g(t)\cdot R_{\Lambda}v(t).
\eeqs

Consider $T\in[\bar T,\infty)$  sufficiently large.
Solving for solution $ |R_{\Lambda}v(t)|$ by the variation of constants formula from the  differential equation \eqref{RLveq} gives, for $t>t_0\ge T$, 
\beqs
 |R_{\Lambda}v(t)|=e^{-\int_{t_0}^t H(y(\tau))(\Lambda-\lambda(\tau))\d\tau } \left(| R_{\Lambda}v(t_0) |
 + \int_{t_0}^t e^{\int_{t_0}^\tau H(y(s))(\Lambda-\lambda(s))\d s}  g_1(\tau) \d\tau \right).
\eeqs

It yields
\beq \label{hl0}
\begin{aligned}
&\int_{t_0}^t H(y(\tau))(\Lambda-\lambda(\tau))\d\tau\\
&=\ln \left(| R_{\Lambda}v(t_0) |
 + \int_{t_0}^t e^{\int_{t_0}^\tau H(y(s))(\Lambda-\lambda(s))\d s}  g_1(\tau) \d\tau \right)
   - \ln |R_{\Lambda}v(t)|.
\end{aligned}
\eeq

We have from \eqref{Rcontract}, \eqref{gMy} and \eqref{ypower} that, for $t\ge T$,
\beq\label{gg}
|g_1(t)|\le|R_\Lambda  g(t)| \le  |g(t)|\le C_9 t^{-(1+\delta/\alpha)}, \text{ where $C_9=2c_*C_2^{\alpha+\delta}$.}
\eeq

Recall that $C_4$ is the positive constant in \eqref{Hyt}.
We  take $\varep_1>0$ small such that
\beq\label{cep1}
C_4\varep_1<\delta/\alpha.
\eeq

Thanks to Lemma \ref{lem1}, we can assume $T$ is sufficiently large so that 
\beqs
|\Lambda-\lambda(s)|<\varep_1 \text{ for all $s\ge T$.}
\eeqs

Together with \eqref{gg}, we have, for $\tau\in[t_0,t]$,
\begin{align*}
e^{\int_{t_0}^\tau H(y(s))(\Lambda-\lambda(s))\d s} | g_1(\tau) |
&\le e^{\int_{t_0}^\tau C_4\varep_1 s^{-1}\d s} C_9\tau^{-(1+\delta/\alpha)}
=\frac{ \tau^{C_4\varep_1}}{ t_0^{C_4\varep_1}} C_9\tau^{-(1+\delta/\alpha)}\\
&=\frac{C_9}{ t_0^{C_4\varep_1}} \tau^{-1-\delta/\alpha+C_4\varep_1}.
\end{align*}

Thanks to this and \eqref{cep1}, 
\beq\label{eta0}
\lim_{t\to\infty}\int_{t_0}^t e^{\int_{t_0}^\tau H(y(s))(\Lambda-\lambda(s))\d s}  g_1(\tau) \d\tau
=\int_{t_0}^\infty e^{\int_{t_0}^\tau H(y(s))(\Lambda-\lambda(s))\d s}  g_1(\tau) \d\tau
=\eta(t_0)\in \R.
\eeq

Note that 
\beq\label{eta}
|\eta(t_0)|\le  \frac{C_9}{ t_0^{C_4\varep_1}} \int_{t_0}^\infty\tau^{-1-\delta/\alpha+C_4\varep_1}\d\tau
=\frac{C_9}{\delta/\alpha-C_4\varep_1} t_0^{-\delta/\alpha}.
\eeq

Passing to the limit as $t\to\infty$ in \eqref{hl0}, we have
\beq\label{hl1}
\int_{t_0}^\infty H(y(\tau))(\Lambda-\lambda(\tau))\d\tau= \ln (| R_{\Lambda}v(t_0) |
 + \eta(t_0))-\ln 1\in\R.
\eeq

By \eqref{hl1}, we can define, for $t_0\ge T$,
$$h(t_0)=\int_{t_0}^\infty H(y(\tau))(\Lambda-\lambda(\tau))\d\tau\in\R.$$

We rewrite \eqref{hl1} as
\beqs
h(t_0)=\ln (| R_{\Lambda}v(t_0) | + \eta(t_0))=\ln (1+(| R_{\Lambda}v(t_0) | -1)+ \eta(t_0)).
\eeqs

With this expression and properties \eqref{oneR} and \eqref{eta}, we have, as $t_0\to\infty$, 
\beq\label{hl2}
| h(t_0) |
=\bigo(\big|| R_{\Lambda}v(t_0) | -1\big|+ |\eta(t_0)|)
=\bigo(t_0^{-\varep_0}+t_0^{-\delta/\alpha})
=\bigo(t_0^{-\varep_2}),
\eeq
where $\varep_2=\min\{\varep_0,\delta/\alpha\}$.

Solving for $R_{\Lambda}v(t)$ from \eqref{dRv} by the variation of constants formula, one has
\beq\label{RvH}
R_{\Lambda}v(t)
 =e^{-\int_{t_0}^t H(y(\tau))(\Lambda-\lambda(\tau))\d\tau } \left( R_{\Lambda}v(t_0) 
 + \int_{t_0}^t e^{\int_{t_0}^\tau H(y(s))(\Lambda-\lambda(s))\d s}  R_{\Lambda}g(\tau)\d\tau \right) .
\eeq

Recall $|R_{\Lambda}g(\tau)|\le |g(\tau)|$.
Same arguments as those from \eqref{gg} to \eqref{eta} with $R_\Lambda g(\tau)$ replacing $g_1(\tau)$, we obtain, similar to \eqref{eta0} and \eqref{eta}, that
\beqs
\lim_{t\to\infty} \int_{t_0}^t e^{\int_{t_0}^\tau H(y(s))(\Lambda-\lambda(s))\d s}  R_{\Lambda}g(\tau) \d\tau
= \int_{t_0}^\infty e^{\int_{t_0}^\tau H(y(s))(\Lambda-\lambda(s))\d s}  R_{\Lambda}g(\tau) \d\tau=X(t_0)\in\R^n
\eeqs
for $t_0\ge T$, and  
 \beq \label{oX}
 |X(t_0)|=\bigo(t_0^{-\varep_2})\text{ as $t_0\to\infty$.}
\eeq

Taking $t\to\infty$ in \eqref{RvH} gives
\beqs
\lim_{t\to\infty}R_{\Lambda}v(t)=v_*\eqdef e^{-h(t_0)}( R_{\Lambda}v(t_0) +X(t_0))\in\R^n.
\eeqs

We rewrite \eqref{RvH} as
\beqs
R_{\Lambda}v(t)
=e^{h(t)-h(t_0)}\left (R_{\Lambda}v(t_0)+X(t_0)-\int_t^\infty e^{h(t_0)-h(\tau)} R_\Lambda g(\tau)\d\tau\right)=e^{h(t)}v_*-X(t).
\eeqs

Thus,
\beqs
|R_{\Lambda}v(t)-v_*|
\le |e^{h(t)}-1|\cdot |v_*| + |X(t)|.
\eeqs

Having $t_0$ fixed in the above formula and using \eqref{hl2} and \eqref{oX},  we deduce, as $t\to\infty$,
\beqs
|R_{\Lambda}v(t)-v_*|
=\bigo(|e^{h(t)}-1|+|X(t)|)
=\bigo(|h(t))|+|X(t)|)=\bigo(t^{-\varep_2}).
\eeqs
Therefore, we obtain the desired estimate \eqref{RLv}.

Next, by the triangle inequality,
\beqs
|v(t)-v_*|\le |v(t)-R_\Lambda v(t)|+|R_\Lambda v(t)-v_*|.
\eeqs
This inequality and estimates \eqref{remv}, \eqref{RLv} imply \eqref{vlim}.

Finally, \eqref{Rvlim} is a direct consequence of \eqref{RLv} and \eqref{vlim}.
The proof is complete.
\end{proof}

To derive further properties of $y(t)$, we require more conditions on the function $H$.

\begin{definition}\label{HCdef}
Let $E$ be a nonempty subset of $\R^n$ and $F$ be a function from $E$ to $\R$. We say $F$ has property (HC) on $E$ if, 
for any  $x_0\in E$,  there exist  numbers  $r,C,\gamma>0$ such that
\beq\label{FHder}
|F(x)-F(x_0)|\le C|x-x_0|^\gamma
\eeq
for any $x\in E$ with $|x-x_0|<r$.
\end{definition}

Because the power $\gamma$ is allowed to depend on each $x_0$, the property (HC) is weaker than a H\"older continuity requirement such as $F\in C^{0,\gamma}(\bar E)$ for some $\gamma\in(0,1)$.

The next condition imposed on $H$ is the following.

\begin{assumption}\label{HHsphere}
The function $H$ has property (HC) on the unit sphere $\{x\in\R^n:|x|=1\}$.
\end{assumption}

The asymptotic behavior of $y(t)$ can be described in the theorem below.

\begin{theorem}\label{thmsym}
There exists a nonzero vector $\xi_*\in\R^n$ such that, as $t\to\infty$,
\beq\label{ysim}
|y(t)-\xi_* t^{-1/\alpha}|=\bigo(t^{-1/\alpha-\varep})\text{ for some $\varep>0$.}
\eeq

In fact, $\xi_*$ is an eigenvector of $A$ associated with the eigenvalue $\Lambda$, and one has
\beq\label{Rxi}
|R_\Lambda y(t)-\xi_* t^{-1/\alpha}|=\bigo(t^{-1/\alpha-\varep})\text{ for some $\varep>0$.}
\eeq

Moreover,
\beq\label{xiHA}
\alpha \Lambda H(\xi_*)=1.
\eeq
\end{theorem}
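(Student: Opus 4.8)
The strategy is to reduce Theorem \ref{thmsym} to the basic case already settled in Theorem \ref{simthm}. By Corollary \ref{cor1}, the solution splits as $y(t)=R_\Lambda y(t)+(\mathrm{Id}-R_\Lambda)y(t)$, where the second piece is $\bigo(t^{-1/\alpha-\varep})$; so it suffices to find the asymptotics of $w(t):=R_\Lambda y(t)$. Projecting \eqref{yReq} by $R_\Lambda$ and using $A R_\Lambda=\Lambda R_\Lambda$, we get
\beqs
w'=-H(y)\,\Lambda\, w + R_\Lambda G(t,y).
\eeqs
The idea is to rewrite $H(y)$ as $H(w)$ plus a controllable error, so that this equation becomes a perturbation of $w'=-\Lambda H(w)w$, which is \emph{not} quite \eqref{basicf} because $H$ is not $|\cdot|^\alpha$. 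To handle this, observe that along the solution $v(t)=y(t)/|y(t)|\to v_*$ and $w(t)/|w(t)|=R_\Lambda v(t)/|R_\Lambda v(t)|\to v_*$ by Lemma \ref{lem4}, and $H(v_*)>0$ by Assumption \ref{Hf}. So I would set $a:=\Lambda H(v_*)$ and write $H(y(t))=H(|y(t)|\,v(t))=|y(t)|^\alpha H(v(t))$, then compare $|y(t)|^\alpha$ with $|w(t)|^\alpha$ (they differ by a factor $1+\bigo(t^{-2\varep})$ from \eqref{remv}) and $H(v(t))$ with $H(v_*)$ (here is where property (HC), Assumption \ref{HHsphere}, enters: $|H(v(t))-H(v_*)|\le C|v(t)-v_*|^\gamma=\bigo(t^{-\gamma\varep})$ by \eqref{vlim}).

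Carrying this out, I would show
\beqs
w'=-a\,|w|^\alpha w + \tilde f(t),\qquad \tilde f(t)=R_\Lambda G(t,y) + \bigl(\Lambda H(v_*)-H(y)\bigr)w + a\bigl(|w|^\alpha - \text{matching term}\bigr)w,
\eeqs
organized so that every term of $\tilde f(t)$ is bounded by a constant times $|w(t)|^{\alpha+1+\delta'}$ for some $\delta'>0$. Concretely: $|R_\Lambda G(t,y)|\le c_*|y|^{1+\alpha+\delta}=\bigo(t^{-1/\alpha-1-\delta/\alpha})$ by \eqref{Gcond} and \eqref{ypower}; the term coming from $H(v(t))-H(v_*)$ contributes $\bigo(|y|^\alpha\cdot t^{-\gamma\varep}\cdot|w|)=\bigo(t^{-1/\alpha-1-\gamma\varep})$; and the term coming from $|y|^\alpha$ versus $|w|^\alpha$ contributes $\bigo(t^{-2\varep}|w|^{\alpha+1})$. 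Using \eqref{RLy} to convert these $t$-powers back into powers of $|w(t)|$, each is $\le \mathrm{const}\cdot|w(t)|^{1+\alpha+\delta'}$ with $\delta'=\min\{\delta,\gamma\varep\alpha,2\varep\alpha\}>0$. Then Theorem \ref{simthm} applies to $w$ directly (note $w(t)\ne0$ for large $t$ by \eqref{RLy}, and $|w(t)|\to0$), yielding a nonzero $\xi_*$ with $|w(t)-\xi_* t^{-1/\alpha}|=\bigo(t^{-1/\alpha-\varep'})$ and $\alpha a|\xi_*|^\alpha=1$. Adding back the negligible $(\mathrm{Id}-R_\Lambda)y$ from Corollary \ref{cor1} gives \eqref{ysim} and \eqref{Rxi}. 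Since $w(t)=R_\Lambda y(t)$ lies in the $\Lambda$-eigenspace for all $t$, so does its limit direction, hence $\xi_*=\lim t^{1/\alpha}w(t)$ is an eigenvector for $\Lambda$; thus $v_*=\xi_*/|\xi_*|$, so $a=\Lambda H(v_*)=\Lambda H(\xi_*)/|\xi_*|^\alpha$, and the identity $\alpha a|\xi_*|^\alpha=1$ becomes exactly $\alpha\Lambda H(\xi_*)=1$, which is \eqref{xiHA}.

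\textbf{Main obstacle.} The delicate point is packaging the perturbation $\tilde f(t)$ so that it genuinely satisfies a bound of the form $|\tilde f(t)|\le M|w(t)|^{1+\alpha+\delta'}$ required by \eqref{frate} in Theorem \ref{simthm}, rather than merely $\bigo(t^{-1/\alpha-1-\delta'/\alpha})$. Because \eqref{RLy} gives two-sided bounds $C_7 t^{-1/\alpha}\le|w(t)|\le C_8 t^{-1/\alpha}$, these two formulations are equivalent up to constants, so the conversion is legitimate; but one must be careful that the application of Theorem \ref{simthm} only needs a bound in terms of $|w(t)|$ (which it does). A secondary subtlety is that property (HC) gives an exponent $\gamma$ depending on the point $v_*$, which is fine since $v_*$ is fixed, and one must make sure the argument is invoked at the single point $x_0=v_*$. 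Everything else is the routine bookkeeping of collecting error terms and choosing $\delta'$ as the minimum of finitely many positive quantities.
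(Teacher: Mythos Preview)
Your proposal is correct and follows essentially the same route as the paper's proof: project by $R_\Lambda$, rewrite the coefficient $H(y)=|y|^\alpha H(v)$ as $|R_\Lambda y|^\alpha H(v_*)$ plus an error controlled via Assumption~\ref{HHsphere} at the single point $v_*$ together with $\big|1-|R_\Lambda v(t)|^\alpha\big|$, then apply Theorem~\ref{simthm} with $a=\Lambda H(v_*)$ and deduce \eqref{xiHA} from $v_*=\xi_*/|\xi_*|$. Your observation that orthogonality of $R_\Lambda$ gives $\big|1-|R_\Lambda v(t)|\big|=\bigo(t^{-2\varep})$ is in fact slightly sharper than the paper's triangle-inequality bound $\bigo(t^{-\varep})$, though this does not affect the outcome.
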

\begin{proof}
We prove  \eqref{Rxi} first. We look for a differential equation for $R_\Lambda y(t)$.

Applying $R_\Lambda$ to equation \eqref{yReq} and rewriting $H(y)=|y|^\alpha H(v)$, we have
\beq\label{Ry1}
(R_\Lambda y)' =-\Lambda  |y|^{\alpha}H(v)R_\Lambda y+R_\Lambda G(t,y).
\eeq

Let $v_*$ be the unit vector in Lemma \ref{lem4}, and $\varep_0>0$ be such that \eqref{remv}, \eqref{remy}, \eqref{RLv}, and \eqref{vlim} hold for $\varep=\varep_0$.

We approximate $|y|^\alpha H(v)$ on the right-hand side of \eqref{Ry1}  in the following way
\beqs
|y|^\alpha H(v)=|R_\Lambda y|^\alpha H(v_*)+g_1(t),
\eeqs
where
\begin{align*}
g_1(t)&=|y(t)|^\alpha (H(v(t))-H(v_*))+(|y(t)|^\alpha -|R_\Lambda y(t)|^\alpha)H(v_*)\\
&=|y(t)|^\alpha \big\{ H(v(t))-H(v_*)+(1 -|R_\Lambda v(t)|^\alpha)H(v_*)\big\}.
\end{align*}

Then 
\beq\label{Rygood}
(R_\Lambda y)' 
= -\Lambda H(v_*) |R_\Lambda y|^{\alpha}R_\Lambda y+g(t),
\eeq
where
$g(t)=-\Lambda g_1(t)R_\Lambda y(t) +R_\Lambda G(t,y(t))$.

We estimate $|g_1(t)|$ first and then $|g(t)|$. 
Take $F=H$, $E$=the unit sphere, the unit vector $x_0=v_*$  in Definition \ref{HCdef}.
Then there exists a number $\gamma>0$ depending on $v_*$  such that, according to \eqref{FHder} with the unit vector $x=v(t)$ for sufficiently large $t$,
\beq\label{Hvv}
|H(v(t))-H(v_*)|=\bigo(|v(t)-v_*|^\gamma) =\bigo(t^{-\gamma\varep_0}).
\eeq

Recall that $\lim_{t\to\infty} |R_\Lambda v(t)|=1$. 
We use the approximation $|s^\alpha-1|=\bigo(|s-1|)$ when the real number $s\to 1$.
As $t\to\infty$, by taking $s=|R_\Lambda v(t)|$ and using estimate  \eqref{oneR}, we derive
\beq\label{1Ral}
\big|1 -|R_\Lambda v(t)|^\alpha\big|
=\bigo\left(\big |1-|R_\Lambda v(t)|\big| \right)=\bigo(t^{-\varep_0}).
\eeq

Combining \eqref{Hvv}, \eqref{1Ral} with \eqref{ypower}, we obtain
\beq\label{g1}
|g_1(t)|=\bigo(|y(t)|^\alpha (t^{-\gamma\varep_0}+t^{-\varep_0}))=\bigo(t^{-1-\varep_1}),
\eeq
where $\varep_1=\varep_0\min\{1,\gamma\}$.

For the last term in $g(t)$, we have from \eqref{Gcond} and \eqref{ypower} that
\beq\label{Ry5}
|R_\Lambda G(t,y(t))|=\bigo( |y(t)|^{\alpha+1+\delta})=\bigo(t^{-1-1/\alpha-\delta/\alpha}).
\eeq

Combining \eqref{g1} and \eqref{Ry5} gives
\beqs
|g(t)|=\bigo(t^{-1-\varep_1}|R_\Lambda y(t)|+t^{-1-1/\alpha-\delta/\alpha})
=\bigo(t^{-1-1/\alpha-\varep_2/\alpha}),
\eeqs
where $\varep_2=\min\{\varep_1\alpha ,\delta\}$.

By the lower bound of $|R_\Lambda y(t)|$ in \eqref{RLy}, we have
\beqs
|g(t)|=\bigo(|R_\Lambda y(t)|^{1+\alpha+\varep_2}).
\eeqs

We apply Theorem \ref{simthm} to solution $R_\Lambda y(t)$ of equation \eqref{Rygood}, for $t> T$, with a sufficiently large $T>0$, constant $a=\Lambda H(v_*) $ and $f=g$  in \eqref{basicf}. It results in  the existence of a nonzero vector $\xi_*\in\R^n$ such that 
\beq\label{Rxi3}
|R_\Lambda y(t)-\xi_* t^{-1/\alpha}|=\bigo(t^{-1/\alpha-\varep_3})\text{ for some $\varep_3>0$,}
\eeq
and 
\beq\label{Hv1}
\alpha \Lambda H(v_*)  |\xi_*|^\alpha=1.
\eeq
The desired statement  \eqref{Rxi} immediately follows \eqref{Rxi3}.

Because $\xi_*=\lim_{t\to\infty} t^{1/\alpha}R_\Lambda y(t)$, by \eqref{Rxi}, and the fact $\xi_*\ne 0$, we have $\xi_*\in R_\Lambda(\R^n)\setminus\{0\}$. Hence, $\xi_*$ is an eigenvector of $A$ associated with $\Lambda$.

Next, we prove \eqref{ysim}. Writing $y(t)-\xi_* t^{-1/\alpha}=({\rm Id}-R_\Lambda)y(t)+(R_\Lambda y(t)-\xi_* t^{-1/\alpha})$, and using  the estimate  \eqref{remy}, with $\varep=\varep_0$, and estimate \eqref{Rxi3} yield
\beqs
|y(t)-\xi_* t^{-1/\alpha}| = \bigo(t^{-1/\alpha-\varep_0}+t^{-1/\alpha-\varep_3}).
\eeqs
This implies \eqref{ysim} with $\varep=\min\{\varep_0,\varep_3\}$.

Finally, we prove \eqref{xiHA}. Let $w(t)=t^{1/\alpha}y(t)$. 
As $t\to\infty$, we have $v(t)\to v_*$ and $w(t)\to \xi_*$, thanks to \eqref{vlim} and \eqref{ysim}.
By  writing $v(t)=w(t)/|w(t)|$ and passing $t\to\infty$, we obtain
\beq\label{vxi} 
v_*=\xi_*/|\xi_*|
\eeq 
Then \eqref{xiHA} follows \eqref{Hv1} and \eqref{vxi}.
The proof is complete.
\end{proof}

\section{The general case}\label{gencase}

In this section, we again study equation \eqref{yReq}, where the matrix $A$ is as in Section \ref{aesec}, and the function $G(t,x)$ is as in Assumption \ref{newG}.
Let $y(t)$ be a solution as in Section \ref{symcase}.

For $1\le k,\ell\le n$, let $E_{k\ell}$ be the elementary $n\times n$ matrix $(\delta_{ki}\delta_{\ell j})_{1\le i,j\le n}$, where $\delta_{ki}$ and $\delta_{\ell j}$ are the Kronecker delta symbols.
For $j=1,2,\ldots,d$, define
 \beq\label{RR}
\hat R_{\lambda_j}=\sum_{1\le i\le n,\Lambda_i=\lambda_j}E_{ii}\text{ and } R_{\lambda_j}=S^{-1}\hat R_{\lambda_j} S.
 \eeq
 
Then one immediately has
\beq \label{Pc} I_n = \sum_{j=1}^{d} R_{\lambda_j},
\quad R_{\lambda_i}R_{\lambda_j}=\delta_{ij}R_{\lambda_j},
\quad  AR_{\lambda_j}=R_{\lambda_j} A=\lambda_j R_{\lambda_j}.
\eeq 

Thanks to \eqref{Pc}, each $R_{\lambda_j}$ is a projection, and $R_{\lambda_j}(\R^n)$ is the eigenspace of $A$ associated with the eigenvalue $\lambda_j$.

When $A$ is symmetric,  the $R_{\lambda_j}$'s defined in \eqref{RR} are the orthogonal projections defined in Section \ref{symcase}.

Regarding the function $H$, we examine Assumption \ref{HHsphere} further.

\begin{lemma}\label{HCequiv}
Let $F$ be a function in $\mathcal H_\alpha(\R^n,\R)$ for some $\alpha>0$.
Then $F$ has property (HC) on the unit sphere if and only if $F$ has property (HC) on $\R^n\setminus\{0\}$.
\end{lemma}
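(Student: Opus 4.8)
\textbf{Proof proposal for Lemma \ref{HCequiv}.}

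The plan is to prove the two implications separately, using the positive homogeneity of $F$ to transfer the local H\"older-type estimate \eqref{FHder} between a point on the unit sphere and a point in its ``radial cone''. One direction is trivial: if $F$ has property (HC) on $\R^n\setminus\{0\}$, then restricting to the unit sphere $\mathbb S=\{|x|=1\}$ immediately gives property (HC) there, since for $x_0\in\mathbb S$ the constants $r,C,\gamma$ furnished on $\R^n\setminus\{0\}$ work verbatim for all $x\in\mathbb S$ with $|x-x_0|<r$ (one may shrink $r$ so that the ball around $x_0$ avoids the origin, which is automatic once $r<1$).

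For the nontrivial direction, suppose $F$ has property (HC) on $\mathbb S$, and fix $x_0\in\R^n\setminus\{0\}$. Set $\rho_0=|x_0|>0$ and $\hat x_0=x_0/\rho_0\in\mathbb S$; let $r,C,\gamma>0$ be the constants from \eqref{FHder} at the sphere-point $\hat x_0$. For $x$ near $x_0$ write $\rho=|x|$ and $\hat x=x/\rho\in\mathbb S$, so that by homogeneity $F(x)=\rho^\alpha F(\hat x)$ and $F(x_0)=\rho_0^\alpha F(\hat x_0)$. Then decompose
\[
F(x)-F(x_0)=\rho^\alpha\bigl(F(\hat x)-F(\hat x_0)\bigr)+\bigl(\rho^\alpha-\rho_0^\alpha\bigr)F(\hat x_0).
\]
The first term is controlled by $C\rho^\alpha|\hat x-\hat x_0|^\gamma$ once $|\hat x-\hat x_0|<r$; the second by $|F(\hat x_0)|\,|\rho^\alpha-\rho_0^\alpha|$. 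It remains to bound $|\hat x-\hat x_0|$ and $|\rho^\alpha-\rho_0^\alpha|$ in terms of $|x-x_0|$ when $x$ is close to $x_0$. For $\rho$: $|\rho-\rho_0|=\bigl||x|-|x_0|\bigr|\le|x-x_0|$, and since $\rho$ stays in a compact interval $[\rho_0/2,\,3\rho_0/2]$ bounded away from $0$, the function $\rho\mapsto\rho^\alpha$ is Lipschitz there, giving $|\rho^\alpha-\rho_0^\alpha|\le L|x-x_0|$ for a constant $L=L(\rho_0,\alpha)$. For $\hat x$: using $\hat x-\hat x_0=\frac{x}{\rho}-\frac{x_0}{\rho_0}=\frac{1}{\rho}(x-x_0)+\bigl(\frac{1}{\rho}-\frac{1}{\rho_0}\bigr)x_0$ and the lower bound $\rho\ge\rho_0/2$, one gets $|\hat x-\hat x_0|\le \frac{2}{\rho_0}|x-x_0|+\frac{2}{\rho_0^2}|\rho-\rho_0|\,|x_0|\le K|x-x_0|$ with $K=K(\rho_0)$. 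Choosing $r'>0$ small enough that $|x-x_0|<r'$ forces both $|x|\in[\rho_0/2,3\rho_0/2]$ and $K|x-x_0|<r$, we combine the two bounds to obtain
\[
|F(x)-F(x_0)|\le C\Bigl(\tfrac{3\rho_0}{2}\Bigr)^\alpha K^\gamma|x-x_0|^\gamma + |F(\hat x_0)|\,L\,|x-x_0|
\le C'\,|x-x_0|^{\min\{\gamma,1\}}
\]
for $|x-x_0|<\min\{r',1\}$, which is exactly property (HC) at $x_0$ with exponent $\min\{\gamma,1\}$ and an appropriate constant $C'$.

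The only mild subtlety — and the ``main obstacle'', such as it is — is bookkeeping the radius: one must shrink the neighborhood of $x_0$ so that simultaneously (a) $x$ stays away from the origin in a fixed compact interval, so that $\rho\mapsto\rho^\alpha$ is Lipschitz and $1/\rho$ is bounded, and (b) the normalized point $\hat x$ lands within the sphere-radius $r$ of $\hat x_0$. Both are achieved by one choice of $r'$ depending only on $\rho_0$, $r$, $\alpha$. No continuity of $F$ beyond homogeneity is needed; in particular the argument does not require $F$ to be continuous at the origin, and it respects the fact that the (HC) exponent may vary from point to point. This completes the proof.
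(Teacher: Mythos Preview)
Your proof is correct and follows essentially the same route as the paper's: decompose $F(x)-F(x_0)$ via homogeneity into an angular part controlled by the (HC) estimate on the sphere and a radial part controlled by the local Lipschitz continuity of $\rho\mapsto\rho^\alpha$, then bound $|\hat x-\hat x_0|$ and $|\rho-\rho_0|$ linearly by $|x-x_0|$ on a small ball. The only cosmetic difference is that you telescope through $F(\hat x_0)$ rather than $F(\hat x)$, which spares you the paper's uniform bound $M_0=\max_{|z|=1}|F(z)|$; otherwise the arguments coincide.
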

\begin{proof}
It is clear that  if $F$ has property (HC) on $\R^n\setminus\{0\}$ then it has property (HC) on the unit sphere.  
 
 Now, suppose $F$ has property (HC) on the unit sphere. 
 Then $F$ is continuous on the unit sphere. Consequently, 
 \beq\label{Hsup}
\max_{|x|=1}|F(x)|=M_0\in[0,\infty).
\eeq
 
 Let $\xi\ne 0$. In Definition \ref{HCdef} for the set $E$ being the unit sphere, we take $x_0$ to be the unit vector $\xi/|\xi$. Then there are $r_1,C_0,\gamma>0$ such that if $x\in\R^n\setminus\{0\}$ and $\big|x/|x|-\xi/|\xi|\big|<r_1$, then 
 \beq\label{Hxx}
 |F(x/|x|)-F(\xi/|\xi|)|\le C_0\big|x/|x|-\xi/|\xi|\big|^\gamma.
 \eeq
 
 Note that the functions  $x\in\R^n\setminus\{0\}\mapsto |x|^\alpha$ and $x\in\R^n\setminus\{0\}\mapsto x/|x|$ are $C^1$-functions. Let $r_2=|\xi|/2$. Then there is a constant $M_1>0$ such that if $|x-\xi|<r_2$ then
 \beq\label{xxineq}
\big||x|^\alpha-|\xi|^\alpha\big|\le M_1|x-\xi|\text{ and } \big|x/|x|-\xi/|\xi|\big|\le M_1|x-\xi|.
 \eeq
 
Set $r_0=\min\{1,r_1/M_1,r_2\}$.
Let $x\in\R^n$ and $|x-\xi|<r_0$. We have $x\ne 0$ and, from the second inequality in \eqref{xxineq},
\beq \label{xr1}
\big|x/|x|-\xi/|\xi|\big|\le M_1 |x-\xi|<r_1.
\eeq 

Combing \eqref{Hxx}, \eqref{xxineq}, \eqref{xr1}  and \eqref{Hsup} yields
 \begin{align*}
| F(x)-F(\xi)|
&=\big||x|^\alpha F(x/|x|)-|\xi|^\alpha F(\xi/|\xi|)\big|\\
&\le \big||x|^\alpha -|\xi|^\alpha\big|\cdot | F(x/|x|)| + |\xi|^\alpha |F(x/|x|)- F(\xi/|\xi|)|\\
&\le M_1|x-\xi| M_0+ |\xi|^\alpha C_0\big|x/|x|-\xi/|\xi|\big|^\gamma
\le M_1M_0|x-\xi|+ |\xi|^\alpha C_0M_1^\gamma |x-\xi|^\gamma.
 \end{align*}
 Thus,
 \beqs
 | F(x)-F(\xi)|\le (M_1M_0+|\xi|^\alpha C_0M_1^\gamma)|x-\xi|^{\min\{1,\gamma\}}.
 \eeqs
Therefore, $F$ has property (HC) on $\R^n\setminus\{0\}$.
\end{proof}

Thanks to Lemma \ref{HCequiv}, Assumptions \ref{Hf} and \ref{HHsphere} can be combined into a simple form as the following.

\begin{assumption}\label{combine}
The function $H$ belongs to $\mathcal H_\alpha(\R^n,\R)$ for some $\alpha>0$, has property (HC) on the unit sphere, and $H> 0$ on the unit sphere.
\end{assumption}

Indeed, by the virtue of Lemma \ref{HCequiv}, such a function $H$ in Assumption \ref{combine} has property (HC) on $\R^n\setminus \{0\}$. Consequently, it is continuous on $\R^n\setminus \{0\}$. By the fact \eqref{Hsup} for $H$ in place of $F$, one has $|H(x)|\le M_0|x|^\alpha$ for all $x\ne 0$. Together with the fact $H(0)=0$, we have $H$ is also continuous at the origin, and, hence, on $\R^n$. Finally, for $x\ne 0$, $H(x)=|x|^\alpha H(x/|x|)> 0$. Thus, $H$ satisfies both Assumptions \ref{Hf} and \ref{HHsphere}.

We are ready to present our main result.

\begin{theorem}\label{mainthm}
Under Assumption \ref{combine}, there exist an eigenvalue $\Lambda$ of $A$ and  an eigenvector $\xi_*$ of $A$ associated with $\Lambda$ such that, as $t\to\infty$,
\beq\label{mainest}
|y(t)-\xi_* t^{-1/\alpha}|=\bigo(t^{-1/\alpha-\varep})\text{ for some $\varep>0$.}
\eeq

More specifically, one has, as $t\to\infty$,
\beq\label{newRy}
|(I_n-R_\Lambda)y(t)|,| R_\Lambda y(t)-\xi_* t^{-1/\alpha}|=\bigo(t^{-1/\alpha-\varep})\text{ for some $\varep>0$,}
\eeq
and relation  \eqref{xiHA}  holds true.
\end{theorem}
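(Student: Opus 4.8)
The plan is to reduce the general case to the symmetric case treated in Theorem \ref{thmsym} by the linear change of variables $z = Sy$ coming from the diagonalization \eqref{Adiag}, exactly as was done in Part 2 of the proof of Theorem \ref{smallthm}. First I would set $A_0 = S A S^{-1} = \mathrm{diag}[\Lambda_1,\dots,\Lambda_n]$, which is symmetric and positive definite, and define $\widetilde H(z) = H(S^{-1}z)$ and $\widetilde G(t,z) = S\,G(t, S^{-1}z)$, so that $z(t) = Sy(t)$ solves $z' = -\widetilde H(z)A_0 z + \widetilde G(t,z)$ on $(t_*,\infty)$, $z(t) \ne 0$ for all $t$, and $z(t)\to 0$ by \eqref{yzerolim} together with the norm equivalence $\|S^{-1}\|_{\rm op}^{-1}|y| \le |z| \le \|S\|_{\rm op}|y|$ from \eqref{Syz}.

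The second step is to verify that the transformed data $A_0$, $\widetilde H$, $\widetilde G$ satisfy all the hypotheses of Section \ref{symcase}. The matrix $A_0$ is symmetric and satisfies Assumption \ref{assumpA}. For $\widetilde H$: since $S^{-1}$ is linear and invertible, $\widetilde H \in \mathcal H_\alpha(\R^n,\R)$, $\widetilde H > 0$ on $\R^n\setminus\{0\}$, and $\widetilde H$ inherits property (HC) on $\R^n\setminus\{0\}$ from $H$ — here I invoke Lemma \ref{HCequiv} to pass from (HC) on the unit sphere to (HC) on $\R^n\setminus\{0\}$, note that $z \mapsto S^{-1}z$ is Lipschitz, so $|\widetilde H(z) - \widetilde H(z_0)| = |H(S^{-1}z) - H(S^{-1}z_0)| \le C\|S^{-1}\|_{\rm op}^\gamma |z - z_0|^\gamma$ for $z$ near $z_0 \ne 0$, and then apply Lemma \ref{HCequiv} again in the reverse direction to conclude (HC) on the unit sphere, i.e. Assumption \ref{HHsphere} for $\widetilde H$. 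For $\widetilde G$: continuity is clear, and from \eqref{Gcond} together with \eqref{Syz} one gets $|\widetilde G(t,z)| \le \|S\|_{\rm op}|G(t,S^{-1}z)| \le \|S\|_{\rm op} c_* |S^{-1}z|^{1+\alpha+\delta} \le \|S\|_{\rm op}\|S^{-1}\|_{\rm op}^{1+\alpha+\delta} c_* |z|^{1+\alpha+\delta}$ whenever $|z| \le r_*/\|S^{-1}\|_{\rm op}$, so Assumption \ref{newG} holds with adjusted constants. Thus Theorem \ref{thmsym} applies to $z(t)$.

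The third step is to translate the conclusion back. Theorem \ref{thmsym} gives an eigenvalue $\Lambda$ of $A_0$ — which is also an eigenvalue of $A$, since $A$ and $A_0$ are similar — and a nonzero vector $\zeta_*$, an eigenvector of $A_0$ associated with $\Lambda$, with $|R^0_\Lambda z(t) - \zeta_* t^{-1/\alpha}| = \bigo(t^{-1/\alpha-\varep})$, $|(I_n - R^0_\Lambda)z(t)| = \bigo(t^{-1/\alpha-\varep})$, and $\alpha\Lambda\widetilde H(\zeta_*) = 1$, where $R^0_\Lambda$ is the orthogonal (spectral) projection of $A_0$ onto its $\Lambda$-eigenspace. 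Set $\xi_* = S^{-1}\zeta_*$. Since $A_0 = SAS^{-1}$, the spectral projection of $A$ onto the $\Lambda$-eigenspace is $R_\Lambda = S^{-1}R^0_\Lambda S$ — this matches the definition \eqref{RR}, because $R^0_\Lambda$ is precisely $\sum_{\Lambda_i = \Lambda} E_{ii} = \hat R_\Lambda$. Hence $R_\Lambda y(t) = S^{-1}R^0_\Lambda z(t)$ and $(I_n - R_\Lambda)y(t) = S^{-1}(I_n - R^0_\Lambda)z(t)$, so applying $S^{-1}$ to the two $z$-estimates and using $|S^{-1}w| \le \|S^{-1}\|_{\rm op}|w|$ yields \eqref{newRy}; adding the two pieces gives \eqref{mainest}. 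Finally $\xi_* = S^{-1}\zeta_*$ is a nonzero eigenvector of $A$ for $\Lambda$ since $\zeta_*$ is one for $A_0$, and $\widetilde H(\zeta_*) = H(S^{-1}\zeta_*) = H(\xi_*)$, so $\alpha\Lambda H(\xi_*) = \alpha\Lambda\widetilde H(\zeta_*) = 1$, which is \eqref{xiHA}.

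I do not expect a genuine obstacle here: the entire argument is a change-of-variables reduction, and the only point requiring a little care is the bookkeeping that the orthogonal spectral projection $R^0_\Lambda$ for the diagonal matrix $A_0$ is conjugated by $S^{-1}$ into exactly the projection $R_\Lambda$ defined in \eqref{RR} (so that \eqref{newRy} is stated in the right coordinates), together with checking that property (HC) is stable under the linear substitution $z \mapsto S^{-1}z$ — both of which are straightforward given Lemma \ref{HCequiv}. If anything, the mildly delicate step is making sure the smallness threshold $r_*$ in Assumption \ref{newG} transforms correctly, but since all estimates are asymptotic as $t\to\infty$ and $z(t)\to 0$, this only affects the choice of the starting time and causes no real difficulty.
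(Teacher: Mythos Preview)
Your proposal is correct and follows essentially the same route as the paper's own proof: conjugate by $S$ to reduce to the symmetric (diagonal) case, verify that $\widetilde H$ and $\widetilde G$ satisfy the required hypotheses (in particular the (HC) property via Lemma \ref{HCequiv} and the Lipschitz substitution $z\mapsto S^{-1}z$), apply Theorem \ref{thmsym}, and then transform the resulting eigenvalue, eigenvector, projections, and identity \eqref{xiHA} back through $S^{-1}$. The bookkeeping you flag --- that the orthogonal projection $R^0_\Lambda$ for $A_0$ coincides with $\hat R_\Lambda$ and conjugates to $R_\Lambda$ --- is exactly what the paper uses, so there is no discrepancy.
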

\begin{proof}
Setting $z(t)=Sy(t)$, we have $z(t)$ satisfies equation \eqref{zeq} with $\widetilde H$ and $\widetilde G$ defined in \eqref{HRz}. We will apply the results in Section \ref{symcase} to equation \eqref{zeq}.

\medskip
\textit{Verification of Assumption \ref{combine} for $\widetilde H$.} Clearly, $\widetilde H\in \mathcal H_\alpha(\R^n)$ and $\widetilde H(z)>0$ for $|z|=1$. 

Let $\xi$ be a unit vector in $\R^n$. By Lemma \ref{HCequiv}, the function $H$ has property (HC) on $\R^n\setminus\{0\}$. Let  $x_0=S^{-1}\xi\ne 0$ in Definition \ref{HCdef} for $E=\R^n\setminus\{0\}$. Then there are numbers $r\in(0,|x_0|)$ and $C,\gamma>0$ such that
\beq\label{HHS}
|H(x)-H(S^{-1}\xi)|\le C|x-S^{-1}\xi|^\gamma,
\eeq
for any $x\in \R^n$ with $|x-S^{-1}\xi|<r$. (Note that such $x$ is already a nonzero vector.) 

Take $r'=r/\|S^{-1}\|>0$. Let $z$ be any unit vector in $\R^n$ and $|z-\xi|<r'$. Set $x=S^{-1}z$. 
Then 
\beq\label{xS}
|x-S^{-1}\xi|=|S^{-1}(z-\xi)|\le \|S^{-1}\|\cdot|z-\xi|<\|S^{-1}\| r'=r.
\eeq

It follows \eqref{HHS} and  \eqref{xS} that
\beqs
|\widetilde H(z)-\widetilde H(\xi)|\le C|x-S^{-1}\xi|^\gamma\le C\|S^{-1}\|^\gamma |z-\xi|^\gamma.
\eeqs

Therefore, the function $\widetilde H$ has property (HC) on the unit sphere.

\medskip
Thanks to \eqref{Syz}, one can also verify that the function $\widetilde G$ satisfies Assumption \ref{newG}, with the same $\alpha,\delta,t_*,T_*$, and a new constant $c_*$.

\medskip
We apply Theorem \ref{thmsym} and Corollary \ref{cor1} to equation \eqref{zeq} and solution $z(t)$, with $A_0$ replacing $A$ and $\hat R_{\lambda_j}$  replacing $R_{\lambda_j}$.
Then there exist an eigenvalue $\Lambda$ of $A_0$ and an eigenvector $\xi_0$ of $A_0$ associated  with $\Lambda$ such that 
\beq\label{zxi}
|z(t)-\xi_0 t^{-1/\alpha}|=\bigo(t^{-1/\alpha-\varep}),
\eeq 
\beq\label{hatRz}
|(I_n-\hat R_\Lambda)z(t)|,|\hat R_\Lambda z(t)-\xi_0 t^{-1/\alpha}|=\bigo(t^{-1/\alpha-\varep}),
\eeq
for some number $\varep>0$, and
\beq\label{A0xi} 
\alpha \Lambda \widetilde H(\xi_0)=1.
\eeq

Let $\xi_*=S^{-1}\xi_0$ which is a nonzero vector. 
It is clear that $\Lambda$ is an eigenvalue of $A$ and  $\xi_*$ is an eigenvector of $A$ associated  with $\Lambda$. 

It follows \eqref{zxi} that 
$|S(y(t)-\xi_* t^{-1/\alpha})|=\bigo(t^{-1/\alpha-\varep})$, 
which implies \eqref{mainest}.

Similarly, we convert \eqref{hatRz} to 
\beqs
|S(I_n-R_\Lambda)y(t)|,|S( R_\Lambda y(t)-\xi_* t^{-1/\alpha})|=\bigo(t^{-1/\alpha-\varep}),
\eeqs
which yields \eqref{newRy}.

Finally, relation \eqref{A0xi} and the fact $\widetilde H(\xi_0)=H(\xi_*)$  imply \eqref{xiHA}.
The proof is complete.
\end{proof}

The following remarks on Theorem \ref{mainthm} are in order.
\begin{enumerate}[label=\rnum]
\item As a consequence of \eqref{xiHA} and \eqref{Hyal}, one can estimate $|\xi_*|$ from above and below by
\beq\label{xibound}
 \frac1{(\alpha c_2 \Lambda_n)^{1/\alpha}} \le |\xi_*|\le \frac1{(\alpha c_1 \Lambda_1)^{1/\alpha}}.
\eeq
These estimates agree with \eqref{yinfsup} derived previously for the symmetric matrix case.

\item One observes that the bounds in \eqref{xibound} are independent of the solution $y(t)$. This is different from the case of the previously studied ODE systems with the lowest order term being linear such as \eqref{uAF}.

\item The actual value of $|\xi_*|$ may still depend on the solution $y(t)$, while, for the basic case in Section \ref{simeg}, it does not.
\end{enumerate}

In the above proof of Theorem \ref{mainthm}, the properties of $\xi_*$ are obtained by using the approximate equation \eqref{Rygood} and the result proved for it in Theorem \ref{simthm}. However, as we will see below, the asymptotic approximation \eqref{mainest}, once established, already determines those properties of $\xi_*$. It is proved independently without using equation \eqref{Rygood}.  

\begin{theorem}\label{xirel}
Under Assumption \ref{combine}, let $y\in C^1([T,\infty),\R^n)$ be a  solution of \eqref{yReq} on $(T,\infty)$ for some $T\ge t_*$. 

Suppose there exist a number $p>0$ and a vector $\xi\in\R^n\setminus\{0\}$ such that
\beq\label{xipa}
|y(t)-\xi t^{-p}|=\bigo(t^{-p-\varep})\text{ for some $\varep>0$.}
\eeq

Then $p=1/\alpha$ and $\xi$ satisfies 
\beq\label{heu}
 H(\xi)A\xi =\frac1\alpha \xi.
\eeq
Consequently,  $\xi$ is an eigenvector of $A$ associated with the eigenvalue $\Lambda\eqdef 1/(\alpha H(\xi))$. 
\end{theorem}
\begin{proof}
On the one hand, having \eqref{Adiag}, we set $z(t)=Sy(t)$ and obtain equation \eqref{zeq}. Then we can establish estimates in \eqref{ypower} for $z(t)$, and, hence, thanks to \eqref{Syz}, for $y(t)$ itself. Consequently, there exist $T>0$ and $c\ge 1$ such that
\beq\label{yalpha}
\frac{t^{-1/\alpha}}{c} \le |y(t)|\le c t^{-1/\alpha}\text{ for all $t\ge T$.}
\eeq

On the other hand, thanks to \eqref{xipa}, there exist $T'>0$ and $c'\ge 1$ such that
\beq\label{yp}
\frac{ t^{-p}}{c'} \le |y(t)|\le c' t^{-p}\text{ for all $t\ge T'$.}
\eeq

Comparing \eqref{yalpha} and \eqref{yp}, we must have $p=1/\alpha$. Thus,  \eqref{xipa} becomes
\beq\label{xiaprox}
|y(t)-\xi t^{-1/\alpha}|=\bigo(t^{-1/\alpha-\varep}).
\eeq

Set $w(t)=t^{1/\alpha}y(t)$. Then \eqref{xiaprox} implies 
\beq\label{limw}
|w(t)- \xi|=\bigo(t^{-\varep}) \text{ and, consequently, }
\lim_{t\to\infty} w(t)= \xi .
\eeq 

We find an asymptotic approximation for $w'(t)$. For sufficiently large $t$, we calculate
\beqs
w'
=\frac1{\alpha t} w+t^{1/\alpha}(-H(y)Ay+G(t,y))
=\frac1{\alpha t} w -\frac1t H(w)Aw+t^{1/\alpha}G(t,y).
\eeqs
Then
\beq\label{weq}
w'=\frac1t\left(\frac1\alpha \xi-H(\xi)A\xi\right) +h(t),
\eeq
where 
\beqs
h(t)=\frac1{\alpha t} (w(t)-\xi) -\frac1t (H(w(t))Aw(t)-H(\xi)A\xi)+t^{1/\alpha}G(t,y(t)).
\eeqs 

We estimate $|h(t)|$. For the first term of $h(t)$, we have
\beqs
\frac1{\alpha t} |w(t)-\xi|=\bigo(t^{-1-\varep}).
\eeqs

For the middle term  of $h(t)$, we have
\beqs
H(w)Aw-H(\xi)A\xi=(H(w)-H(\xi))Aw+H(\xi)A(w-\xi).
\eeqs

By Lemma \ref{HCequiv}, the function $H$ has property (HC) on the set $E=\R^n\setminus\{0\}$.
By Definition \ref{HCdef}  with $F=H$ and $x_0=\xi$,  and the facts in \eqref{limw}, there exists $\gamma>0$ such 
\beqs
|H(w(t))-H(\xi)| \cdot |Aw(t)|=\bigo(|w(t)-\xi|^\gamma\cdot 1) =\bigo(t^{-\gamma\varep}).
\eeqs

Clearly, $H(\xi)|A(w(t)-\xi)|=\bigo(|w(t)-\xi|)=\bigo(t^{-\varep})$. Thus,
\beqs
t^{-1}|H(w(t))Aw(t)-H(\xi)A\xi|=\bigo(t^{-1-\varep}+ t^{-1-\gamma\varep}).
\eeqs

For the last term of $h(t)$, it follows \eqref{Gcond} and \eqref{yalpha} that
\beqs
t^{1/\alpha}|G(t,y(t))| =\bigo(t^{1/\alpha}\cdot t^{-(1+\alpha+\delta)/\alpha})=\bigo(t^{-1-\delta/\alpha})  .
\eeqs

Therefore,  
\beq\label{hsec6}
|h(t)|=\bigo(t^{-1-\varep}+ t^{-1-\gamma\varep}+t^{-1-\delta/\alpha})=\bigo(t^{-1-\delta_1}),
\eeq 
where $\delta_1=\min\{\varep,\gamma\varep,\delta/\alpha\}$.

Let $t>s$ be sufficiently large numbers. Integrating equation \eqref{weq} from $s$ to $t$, and taking into account \eqref{hsec6} give
\beqs
\left| w(t)-w(s)-\left (\frac1\alpha \xi-H(\xi)A\xi\right)\ln (t/s)\right|\le C(s^{-\delta_1}-t^{-\delta_1}),
\eeqs
for some constant $C>0$.
Letting $t=2s$ and taking $s\to\infty$ yield
\beqs
\frac1\alpha \xi-H(\xi)A\xi=0,
\eeqs
which proves \eqref{heu}. The last statement in Theorem \ref{xirel} is an obvious consequence of \eqref{heu}.
\end{proof}

Below, we provide explicit examples for $H$ satisfying Assumption \ref{combine}.
Firstly, here are some elementary facts.

\begin{lemma}\label{HCalg}
Let $E$ be a nonempty subset of $\R^n$. Suppose two functions $F_1,F_2:E\to \R$ have property (HC) on E. Then so do the functions $aF_1+bF_2$ and $F_1F_2$ for any $a,b\in \R$.
\end{lemma}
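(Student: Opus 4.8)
The plan is to verify property (HC) directly from Definition \ref{HCdef}, working at an arbitrary point $x_0\in E$ and exploiting the locality of the condition. First I would fix $x_0\in E$ and apply the hypothesis separately to $F_1$ and to $F_2$: this yields radii $r_1,r_2>0$, constants $C_1,C_2>0$, and exponents $\gamma_1,\gamma_2>0$ such that $|F_i(x)-F_i(x_0)|\le C_i|x-x_0|^{\gamma_i}$ whenever $x\in E$ and $|x-x_0|<r_i$. I would then set $r=\min\{r_1,r_2,1\}$ so that both estimates hold simultaneously and, crucially, so that $|x-x_0|<1$; this last reduction lets me bound a smaller power by a larger one, i.e.\ $|x-x_0|^{\max\{\gamma_1,\gamma_2\}}\le |x-x_0|^{\min\{\gamma_1,\gamma_2\}}$, which is what makes the two exponents reconcilable.

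For the linear combination $aF_1+bF_2$, the triangle inequality gives
\beqs
|(aF_1+bF_2)(x)-(aF_1+bF_2)(x_0)|\le |a|\,C_1|x-x_0|^{\gamma_1}+|b|\,C_2|x-x_0|^{\gamma_2},
\eeqs
and on $|x-x_0|<r\le 1$ both terms are dominated by $(|a|C_1+|b|C_2)|x-x_0|^{\gamma}$ with $\gamma=\min\{\gamma_1,\gamma_2\}$, establishing (HC) for $aF_1+bF_2$ with this $\gamma$. For the product, I would use the standard factorization
\beqs
F_1(x)F_2(x)-F_1(x_0)F_2(x_0)=\bigl(F_1(x)-F_1(x_0)\bigr)F_2(x)+F_1(x_0)\bigl(F_2(x)-F_2(x_0)\bigr),
\eeqs
then bound $|F_2(x)|\le |F_2(x_0)|+C_2|x-x_0|^{\gamma_2}\le |F_2(x_0)|+C_2$ on the ball of radius $r\le 1$, so that $|F_2(x)|$ is bounded by a constant $K$ depending only on $x_0$. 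This gives $|F_1(x)F_2(x)-F_1(x_0)F_2(x_0)|\le KC_1|x-x_0|^{\gamma_1}+|F_1(x_0)|C_2|x-x_0|^{\gamma_2}$, which is again $\le C|x-x_0|^{\gamma}$ with $\gamma=\min\{\gamma_1,\gamma_2\}$ and $C=KC_1+|F_1(x_0)|C_2$.

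There is no real obstacle here; the only point requiring a little care is the one already flagged: the exponent $\gamma$ in Definition \ref{HCdef} is allowed to depend on the base point $x_0$, and the two summands naturally carry different exponents $\gamma_1,\gamma_2$, so one must shrink the neighborhood to radius at most $1$ before collapsing to the common exponent $\min\{\gamma_1,\gamma_2\}$. Bounding $|F_2(x)|$ locally by a constant — rather than trying to keep it as a function — is the other small but essential move for the product case. Since $x_0$ was arbitrary in $E$, this proves $aF_1+bF_2$ and $F_1F_2$ have property (HC) on $E$. (In particular, iterating the product statement shows every power $F^m$, $m\in\N$, and hence every polynomial in functions with property (HC), again has property (HC) — which is exactly what is needed for the examples of $H$ in Example \ref{Heg}.)
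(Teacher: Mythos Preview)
Your proof is correct and follows essentially the same approach as the paper's: fix $x_0$, shrink the radius to $r=\min\{1,r_1,r_2\}$ so that the two H\"older-type estimates hold simultaneously and $|x-x_0|<1$, then use the common exponent $\gamma=\min\{\gamma_1,\gamma_2\}$ together with the triangle inequality (for the linear combination) and the standard factorization plus local boundedness (for the product). You have in fact supplied the details that the paper leaves to the reader.
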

\begin{proof}
Suppose $F_i$, for $i=1,2$, satisfies \eqref{FHder} with $r_i,C_i,\gamma_i>0$.  
Let $r=\min\{1,r_1,r_2\}$, $C=\max\{C_1,C_2\}$ and $\gamma=\min\{\gamma_1,\gamma_2\}$. Then both $F_1$ and $F_2$ satisfy \eqref{FHder} with the same numbers $r,C,\gamma$. The statement for $aF_1+bF_2$ is now obviously true. For the product $F_1F_2$, one observes that $F_1$ and $F_2$ are bounded on the set $\{x\in E: |x-x_0|<r\}$. The rest of the proof is standard. We omit the details.
\end{proof}

For $p\ge 1$,  the $\ell^p$-norm of $x=(x_1,\ldots,x_n)\in\R^n$ is $\|x\|_p=(\sum_{j=1}^n |x_j|^p)^{1/p}$.

\begin{lemma}\label{Hpower}
Let $E=\R^n\setminus\{0\}$ and $p\ge 1$, $\alpha>0$.
Then the function $H(x)=\|x\|_p^\alpha$, for $x\in\R^n$,  belongs to $\mathcal H_\alpha(\R^n,\R)$ and has property (HC) on E with the same power $\gamma=1$ in \eqref{FHder}.
\end{lemma}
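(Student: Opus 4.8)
The plan is to factor $H$ as the composition of the $\ell^p$-norm $x\mapsto\|x\|_p$, which is globally Lipschitz on $\R^n$, with the real power function $s\mapsto s^\alpha$, which is Lipschitz on any compact interval bounded away from the origin. Since property (HC) only needs to be checked at a fixed $x_0\in E$, and $x_0\ne 0$, we may localize so that $\|x\|_p$ stays in such an interval; this is the whole point of the argument.

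First I would record two elementary estimates. By the reverse triangle inequality for the norm $\|\cdot\|_p$, one has $\big|\|x\|_p-\|x_0\|_p\big|\le\|x-x_0\|_p$ for all $x,x_0\in\R^n$. By the Cauchy--Schwarz inequality, $\|z\|_1\le\sqrt n\,|z|$, and since $\|z\|_p\le\|z\|_1$ for every $p\ge1$, this gives $\|z\|_p\le\sqrt n\,|z|$ for all $z\in\R^n$. Combining the two, $\big|\|x\|_p-\|x_0\|_p\big|\le\sqrt n\,|x-x_0|$.

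Next I would fix $x_0\in E$ and set $m=\|x_0\|_p>0$. Choosing $r=\min\{1,\,m/(2\sqrt n)\}$, the estimate above yields $m/2\le\|x\|_p\le 3m/2$ whenever $|x-x_0|<r$. On the interval $[m/2,3m/2]$ the function $s\mapsto s^\alpha$ is Lipschitz with a finite constant $L=L(m,\alpha)$; by the mean value theorem one may take $L=\alpha\max\{(m/2)^{\alpha-1},(3m/2)^{\alpha-1}\}$. Hence, for $|x-x_0|<r$,
$$\big|\,\|x\|_p^\alpha-\|x_0\|_p^\alpha\,\big|\le L\,\big|\|x\|_p-\|x_0\|_p\big|\le L\sqrt n\,|x-x_0|,$$
which is exactly \eqref{FHder} with $C=L\sqrt n$ and $\gamma=1$. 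As $x_0\in E$ was arbitrary, $H$ has property (HC) on $E$ with the uniform power $\gamma=1$.

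I do not anticipate any genuine obstacle. The only subtlety worth flagging is that $s\mapsto s^\alpha$ is merely \emph{locally} Lipschitz near $s=0$ when $0<\alpha<1$, which is precisely why the restriction to $|x-x_0|<r$ with $x_0\ne 0$ is essential; on $E=\R^n\setminus\{0\}$ this causes no trouble.
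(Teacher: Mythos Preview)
Your proof is correct and follows essentially the same approach as the paper: bound $\big|\|x\|_p-\|x_0\|_p\big|$ by $\sqrt n\,|x-x_0|$ via the reverse triangle inequality and norm equivalence, then use that $s\mapsto s^\alpha$ is Lipschitz on a compact interval bounded away from $0$. The only cosmetic difference is that the paper disposes of the case $p>1$ in one line by observing $H\in C^1(E)$ and runs the explicit argument only for $p=1$, whereas you treat all $p\ge1$ uniformly.
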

\begin{proof}
The fact that $H$ belongs to $\mathcal H_\alpha(\R^n,\R)$ is obvious.
When $p>1$, one has $H\in C^1(E)$, hence, $H$ has property (HC) on E with the same power $\gamma=1$.

Consider the case $p=1$.
Let $\xi\in E$ and set $r=\|\xi\|_1>0$. 
Note that there is $C>0$ such that  
$$|t^\alpha-r^\alpha|\le C|t-r|\text{ for all } t\in I\eqdef [r/2,3r/2].$$

Let $x\in E$ with  $|x-\xi|<r/(2\sqrt n)$.
One has 
$$\big | \|x\|_1-\|\xi\|_1\big| 
\le \|x-\xi\|_1\le \sqrt{n}|x-\xi|<r/2,$$
which implies $t\eqdef \|x\|_1\in I$. Thus,
\beqs
|H(x)-H(\xi)|=\big | \|x\|_1^\alpha-\|\xi\|_1^\alpha\big| 
\le C \big | \|x\|_1-\|\xi\|_1\big| 
\le C\sqrt{n}|x-\xi|,
\eeqs
which proves  that $H$ has property (HC) on E with the same power $\gamma=1$.
\end{proof}

\begin{example}\label{Heg} The requirement of $H$ having property (HC) on the unit sphere is not strict. In many cases, $H$, in fact, is a $C^1$-function on $\R^n\setminus\{0\}$, hence, it meets this condition. For example, as in Section \ref{simeg}, $H(x)=|x|^\alpha$ for $\alpha>0$. A generalization is 
\beq\label{Hprod}
H(x)=\|K_1 x\|_{p_1}^{\alpha_1}\|K_2 x\|_{p_2}^{\alpha_2}\ldots \|K_m x\|_{p_m}^{\alpha_m},
\eeq
where, for $j=1,2,\ldots, m$,  $K_j$ is an invertible $n\times n$ matrix, $\alpha_j>0$, and $p_j>1$. 
In this case, $H$ satisfies Assumption \ref{combine} with 
\beq\label{alph} 
\alpha=\alpha_1+\alpha_2+\ldots+\alpha_m.
\eeq
In fact, thanks to Lemmas \ref{Hpower} and \ref{HCalg}, we allow $p_j\ge 1$ in \eqref{Hprod}.

Of course, $H$ can also be a linear combination, with positive coefficients, of the functions of the form \eqref{Hprod} having possibly  different $m$'s, but resulting in the same $\alpha$ in \eqref{alph}. 

The function $H$ can be even more complicated. Here are some examples when $n=2$.
For $x=(x_1,x_2)\in\R^2,$
\begin{align*}
&H(x)=(x_1^2+3x_2^2)^{3/4},&&
H(x)=(x_1^4-x_1^2 x_2^2 +x_2^4)^{5/3},\\
&H(x)=(|x_1^6+5x_2^6|^{4/3}+|2x_1^6-x_2^6|^{4/3})^{1/7},&&
H(x)=(\|x\|_{5/3}^6+\|x\|_{7/4}^6)^{11/8},\\
&H(x)=\sqrt{|x_1|}+\sqrt{|x_2|}.
\end{align*}

Note that the last function $H$ belongs to $\mathcal H_{1/2}(\R^2,\R)$ and has property (HC) on $\R^2\setminus\{0\}$ with the same power $\gamma=1/2$, but is not a $C^1$-function on $\R^2\setminus\{0\}$.

More examples of $H$ can be constructed by the similar investigation in section 6 of \cite{CaHK1}.
\end{example}

\begin{remark}\label{other}
The study of the solutions of ODE systems near an equilibrium has a long history.
One of the long-standing methods for detailed descriptions of their asymptotic behavior is the Poincar\'e--Dulac normal form \cite{ArnoldODEGeo,BibikovBook,LefschetzBook}.
This has been developed much further by many researchers over the years, see the books \cite{BrunoBook1989,BrunoBook2000}, monograph  \cite{KFbook2013}, and, for example, papers \cite{Bruno2004,Bruno2008c,Bruno2012,Bruno2018} and references therein. 
However, the techniques from this approach, such as the generalized normal forms and power geometry in \cite{BrunoBook1989,BrunoBook2000}, are not applicable to the equations of our current interest. In fact, our class of equations, problems, techniques and those in \cite{BrunoBook1989,BrunoBook2000,KFbook2013} are quite different, and can be considered as complementary to each other. For instance, the main task in \cite{Bruno2004,Bruno2008c,Bruno2012,Bruno2018,KFbook2013} is to \textit{find a solution} with a certain type of expansions. 
On contrary, our result establishes the exact asymptotic approximation for \textit{any} (nontrivial, decaying) solution.
\end{remark}

% \medskip
% \noindent\textbf{Declarations (JDDE).}
% \begin{itemize}
% \item Ethical Approval: not applicable.
% \item Competing interests: the author has no competing interests of a financial or personal nature.
% \item Authors' contributions: not applicable.
% \item Funding: not applicable.
% \item Availability of data and materials: not applicable.
% \end{itemize}

%\bibliography{paperbaseall}{}
%\bibliographystyle{plain}
\def\cprime{$'$}

 \end{document}